\newtheorem{theorem}{Theorem}[section]
\newtheorem{lemma}[theorem]{Lemma}     
\newtheorem{corollary}[theorem]{Corollary}
\newtheorem{proposition}[theorem]{Proposition}
\newtheorem{remark}[theorem]{Remark}
\newtheorem{definition}[theorem]{Definition}
\newcommand{\fX}{\mathfrak{X}}
\newcommand{\C}{\mathbb{C}}
\newcommand{\hm}{\mathrm{Hom}}
\newcommand{\SL}{\mathrm{SL}}
\newcommand{\SU}{\mathrm{SU}}
\newcommand{\quot}{/\!\!/}
\newcommand{\R}{\mathbb{R}}
\newcommand{\xb}{\mathbf{x}}
\newcommand{\yb}{\mathbf{y}}
\newcommand{\ub}{\mathbf{u}}
\newcommand{\vb}{\mathbf{v}}
\newcommand{\wb}{\mathbf{w}}
\newcommand{\id}{\mathbf{I}}
\newcommand{\tr}{\rm{tr}}
\def\R{\mathbb R}
\def\C{\mathbb C}
\def\N{\mathbb N}
\def\z{{\bf z}}
\def\w{{\bf w}}
\def\W{\mathbb W}
\def \ch{{\bf H}_{\C}}
\def\tr{{\rm{tr}}}
\title[Invariants of pairs]{Invariants of pairs in $\SL(4, \C)$ and $\SU(3, 1)$}
\author[K. Gongopadhyay]{Krishnendu Gongopadhyay}
\address{Indian Institute of Science Education and Research (IISER) Mohali, Knowledge City, S.A.S. Nagar,
Sector 81, P. O. Manauli, Pin 140306, India}
\email{krishnendug@gmail.com, krishnendu@iisermohali.ac.in}
\author[S. Lawton]{Sean Lawton}
\address{Department of Mathematical Sciences, George Mason University,
4400 University Drive,
Fairfax, Virginia  22030, USA}
\email{slawton3@gmu.edu}
\subjclass[2010]{14D20, 20H10 (Primary) 20C15, 14L30, 20E05,  30F40, 15B57, 51M10 (Secondary)}
\keywords{Character variety, $4\times 4$ invariants, complex hyperbolic space, two-generator subgroup, traces.}
\begin{document}

\begin{abstract}
We describe a minimal global coordinate system of order 30 on the $\SL(4,\C)$-character variety of a rank 2 free group. Using symmetry within this system, we obtain a smaller collection of 22 coordinates subject to 5 further real relations that determine conjugation classes of generic pairs of matrices in $\SU(3,1)$.
\end{abstract}

\maketitle

\section{Introduction}

There is a long tradition of work whose goal is to classify pairs of elements in ${\rm SL}(2, \R)$ or ${\rm SL}(2, \C)$ 
in connection with Fuchsian and Kleinian groups. The study goes back to the work of Vogt \cite{vogt} and Fricke \cite{fricke} who proved that a non-elementary free two-generator subgroup is determined up to conjugation by the traces of the generators and 
their product. This result was incremental in the development of Teichm\"uller theory, and in particular, it was used to provide Fenchel-Nielsen coordinates on Teichm\"uller space. For an up to date exposition of this work see Goldman \cite{gold2}. 

In an attempt to define analogous Fenchel-Nielsen coordinates for complex hyperbolic
quasi-Fuchsian representations of surface groups, Parker and Platis \cite{pp} proved a generalization of the result of Fricke-Vogt for two-generator subgroups $\langle A, B \rangle$ in ${\rm SU}(2,1)$ with loxodromic generators. Parker and Platis followed an approach that uses traces of the generators and a point on the cross-ratio variety. In another approach, Will \cite{will1, will2} proved that a free two-generator Zariski dense subgroup of ${\rm SU}(2,1)$ is determined by  traces of the generators and the traces of three more compositions of the generators. For a survey of these results see Parker \cite{parker}. The starting point of Will's work is the results of Wen \cite{wen} and Lawton \cite{law} which provide a minimal 
generating set for the moduli space of ${\rm SL}(3, \C)$-representations of a rank 2 free group $F_2$. Let 
$\langle A, B \rangle$ denote the discrete, free subgroup of ${\rm SU}(2,1)$  generated by $A$ and $B$.  To give Fenchel-Nielsen type coordinates on the ${\rm SU}(2,1)$-character variety of $F_2$, one should obtain a set of conjugation invariant 
quantities that generically determine $\langle A, B \rangle$ up to conjugation.

As shown in \cite{law}, the space of ${\rm SL}(3, \C)$-representations of 
$F_2$ is a branched double cover of $\C^8$, and as shown in 
\cite{FL} this moduli space is homotopic to a topological 8-sphere. The nine 
minimal inverse symmetric trace coordinates described in \cite{law}, along with 
the fact that $\SU(2,1)$ matrices $A$ have their inverses conjugate to the complex conjugate-transpose of $A$, directly shows that the moduli space of  ${\rm SU}(2, 
1)$-representations of $F_2$ is determined by exactly 5 of the 9 coordinates 
from the complex case.  Since the full relation was determined in these 
symmetric terms in \cite{law}, further reduction to the sign of the 5-th trace 
coordinate, the trace of the commutator of $A$ and $B$, is possible (see 
\cite{parker}).  Similar results hold for ${\rm SU}(3)$-representations of 
$F_2$, see \cite[Section 6.5]{FL}.

Things become more complicated in ${\rm SU}(3,1)$. The approach of Will is
difficult to adapt in this setting. A reason is that the minimal generating set 
for the moduli space of ${\rm SL}(4, \C)$-representations of $F_2$ consists of 
30 complex trace-invariants as shown in Theorem \ref{min-gen-sl} below.  This 
follows from work of Drensky and Sadikova \cite{ds}, and Djokovich \cite{do}.  
The full set of relations between these trace-invariants is not known. 
However, using symmetry as done in \cite{law}, we 
obtain, similar to the cases of $\SU(2,1)$ and $\SU(3)$, a reduction from the 30 minimal 
trace coordinates described in Corollary \ref{min-gen-sl-sym} to a collection 
of 22 complex trace coordinates subject to 5 real relations.  This results in 39 real coordinates that appear to be unshortenable (see Definition \ref{unshort-def}).  We also show that the smallest possible collection of such real coordinates is 30.  These coordinates determine any $\SU(3,1)$ representation of $F_2$ having a closed conjugation orbit (polystable representation); including Zariski-dense pairs in ${\rm 
SU}(3,1)$. This is the content of Theorem \ref{su31-traces}.

Using geometric methods similar to the construction of classical Fenchel-Nielsen
coordinates and that of Parker-Platis \cite{pp}, a minimal 
set of 15 real invariants for generic pairs of {\it loxodromic} elements in 
${\rm SU}(3,1)$ was obtained in \cite{gp2}.  Let $\ch^3$ be three 
dimensional complex hyperbolic space where ${\rm SU}(3,1)$ acts by
isometries. A pair of loxodromic elements in ${\rm SU}(3,1)$ is 
called \emph{reducible} if it preserves a totally geodesic subspace of 
$\ch^3$.  The subgroup $\langle A, B \rangle$ is called 
reducible if the pair $(A, B)$ is reducible. Our final theorem
proves that reducible two generator subgroups of  ${\rm SU}(3,1)$ generated by loxodromic pairs $( A, B)$ are 
determined by at most 11 real coordinates (see Theorem \ref{red-lox-traces}).

\subsection*{Acknowledgements}
We acknowledge support from U.S. National Science Foundation grants: DMS 1107452, 1107263, 1107367 ``RNMS: GEometric structures And Representation varieties" (the GEAR Network).  Gongopadhyay acknowledges partial support from NBHM India, Grant NBHM/R.P. 7/2013/Fresh/992. Lawton was partially supported by grants from the Simons Foundation (Collaboration \#245642) and the U.S. National Science Foundation (DMS \#1309376), and acknowledges support by the National Science Foundation under Grant No. 0932078000 while he was in residence at the Mathematical Sciences Research Institute in Berkeley, California, during the Spring 2015 semester.  We thank Shiv Parsad for helpful conversations early in this project, and Lawton thanks Chris Manon for helpful references.  Lastly, we thank an anonymous referee whose suggestions helped improve the readability of the paper.

\section{Preliminaries}\label{prel} 

\subsection{Character Varieties}\label{charvar}

Let $\Gamma=\langle \gamma_1,...,\gamma_r\ |\ R\rangle$ be a finitely generated discrete group (with relations $R$) and $G$ a connected Lie group.  The set of homomorphisms $\hm(\Gamma, G)$ naturally sits inside the product space $G^r$ via the evaluation mapping $\rho\mapsto (\rho(\gamma_1),...,\rho(\gamma_r))$.  Therefore, $\hm(\Gamma, G)$ inherits the subspace topology.  Define $\hm(\Gamma,G)^*$ to be all $\rho$ in $\hm(\Gamma,G)$ such that the conjugation orbit of $\rho$ is closed.  Such points are called polystable.  The $G$-character variety of $\Gamma$ is then the conjugation orbit space $\fX(\Gamma, G):=\hm(\Gamma, G)^*/G$.  When $G$ is a complex reductive affine algebraic group, $\hm(\Gamma, G)$ is an affine variety (cut out of the product variety $G^r$ by the words in $R$).  Then by \cite[Theorem 2.1]{FlLa4}, $\fX(\Gamma, G)$ is homeomorphic to the geometric points (with the Euclidean topology on an affine variety) of the Geometric Invariant Theory (GIT) quotient $\hm(\Gamma,G)\quot G:=\mathrm{Spec}(\C[\hm(\Gamma, G)]^G)$, where $\C[\hm(\Gamma, G)]^G$ is the ring of $G$-invariant polynomials in the coordinate ring $\C[\hm(\Gamma,G)]$.  Moreover, by \cite[Proposition 3.4]{FLR} the GIT quotient with this topology is homotopic to the non-Hausdorff quotient space $\hm(\Gamma,G)/G$.

When $\Gamma$ is a free group $F_r$, as is our concern in this paper, the topology of these spaces has recently garnered significant attention.  In particular, many of the homotopy groups have been computed; see \cite{FLR} and 
reference therein.  In this case, the geometric structure is largely influenced by its algebraic structure, which itself owes much to a non-commutative algebra described in \cite{La0}.  We summarize here for the reader.

Let $F^{+}_r$ be the free non-commutative monoid generated by the formal symbols $\{x_1,...,x_r\}$, and let $M^+_r$ be the monoid  
generated by $\{\xb_1,\xb_2,...,\xb_r\},$ where $\xb_k=(x_{ij}^k)$ are matrices in $rn^2$ indeterminates, under matrix multiplication and with identity $\id$ the $n\times n$ identity matrix.  There is a surjection $F^{+}_r\to M^+_r$, defined by mapping $x_i\mapsto \xb_i$.  Let $\wb \in M^+_r$ be the image of $w\in F^{+}_r$ under this map.  Further, let $|\cdot|$ be the function that takes a cyclically reduced word in $F_r$ to its word length.  Then by \cite{P1,Ra}, we know the ring of invariants $\C[\mathfrak{gl}(n,\C)^r]^{\SL(n,\C)}$ is generated by $\{\tr(\wb)\ |\ w\in F^{+}_r, \ |w|\leq n^2\}.$

As mentioned above, $\C[\fX(F_r,\SL(n,\C))]\cong\C[\SL(n,\C)^r]^{\SL(n,\C)}$, and since the determinant is conjugation invariant $\C[\fX(F_r,\SL(n,\C))]$ is further isomorphic to \begin{equation}\label{specialize} (\C[\mathfrak{gl}(n,\C)^r]/\Delta)^{\SL(n,\C)}\cong\C[\mathfrak{gl}(n,\C)^r]^{\SL(n,\C)}/\Delta,\end{equation} where $\Delta$ is the ideal generated by the $r$ polynomials $\mathrm{det}(\xb_k)-1$.  Since the characteristic polynomial allows one to write the determinant as a polynomial in traces of words, we conclude that $\C[\fX(F_r,\SL(n,\C))]$ is generated by $\{\tr(\wb)\ |\ w\in F^{+}_r, \ |w|\leq n^2\}$ as well.

Let $M_r^*$ extend the monoid $M_r^+$ by $\{\xb^*_1,\xb^*_2,...,\xb^*_r\},$ where $\xb^*_k$ is the matrix whose $(i,j)^{\text{th}}$ entry is $(-1)^{i+j}$ times the determinant obtained by removing the $j^{\text{th}}$ row and $i^{\text{th}}$ column of $\xb_k$. Observe that $(\xb\yb)^*=\yb^*\xb^*$ for all $\xb,\yb \in M^+_r$, and $\xb\xb^*=\det(\xb)\id.$  Now let $M_r=M_r^*/N_r$ where
$N_r$ is the normal sub-monoid generated by $\{\det(\xb_k)\id\ |\ 1\leq k\leq r\}$.  Observe $\xb^*=\xb^{-1}$ in $M_r$, so $M_r$ is a group. Now let $\C M_r$ be the group algebra defined over $\C$ with respect to matrix addition 
and scalar multiplication in $M_r$.  Then the trace map $\tr:\C M_r\to \C[\fX(F_r, \SL(n,\C))]$ forms a bridge between the non-commutative algebra $\C M_r$ and the moduli space $\fX(F_r, \SL(n,\C))$, built in the language of free groups.

This relationship has been exploited to obtain many geometric results in the case $n=3$.  In particular, the relationship to this non-commutative algebra was used in \cite{law} to completely describe the coordinate ring of $\fX(F_2, \SL(3,\C))$.  As a consequence, $\fX(F_2, \SL(3,\C))$ was seen to be a branched double cover over $\C^8$ whose branching locus was exactly the transpose fixed representations.   This in turn led to applications to real projective structures on surfaces (see \cite{La0}), the study of the Poisson geometry of $\fX(F_2, \SL(3,\C))$ in \cite{La4}, and was used in \cite{FL} to show that $\fX(F_2, \SL(3,\C))$ is homotopic to the 8-sphere $S^8$.  

In Section \ref{fourbyfour}, we use the above general relationship to non-commutative algebra to likewise address the $n=4$ and $r=2$ case.

To that end, we will need results from \cite{do}.  To an $\N$-graded algebra $A=\oplus_{i\in \N} A_i$ one has its Hilbert-Poincar\'e series $h(A)=\sum_{i\geq 0}(\dim A_i)t^i$.  Since $\C[\mathfrak{gl}(n,\C)^r]^{\SL(n,\C)}$ is clearly graded, it makes sense to consider its Hilbert-Poincar\'e polynomial.  In \cite{Teranishi2}, $h(\C[\mathfrak{gl}(4,\C)^2]^{\SL(4,\C)})$ is computed and in \cite{Teranishi} a maximal algebraically independent set of parameters is described for the same case.  Using this and the general fact mentioned above that $\C[\mathfrak{gl}(n,\C)^r]^{\SL(n,\C)}$ is generated by traces of words in generic matrices, Djokovich  (see \cite[Theorem 4.2]{do}) begins with Teranishi's system of parameters and systematically augments these coordinates with new elements of the ring $\C[\mathfrak{gl}(4,\C)^2]^{\SL(4,\C)}$ while recalculating the Hilbert-Poincar\'e polynomial of the subalgebras obtained until he gets the right coefficients for $h(\C[\mathfrak{gl}(4,\C)^2]^{\SL(4,\C)})$.  The result is a minimal generating set for $\C[\mathfrak{gl}(4,\C)^2]^{\SL(4,\C)}$.

In the next section we will use this generating set to obtain a minimal generating set for $\C[\fX(F_2,\SL(4,\C))]$ using the following fact:

\begin{proposition}{\cite[Proposition 21]{law2}}\label{unshort}
If $\{\tr(\wb_1),...,\tr(\wb_N)\}$ union $\{\tr(\xb_1^n),...,\tr(\xb_r^n)\}$ is a minimal generating set for $\C[\mathfrak{gl}(n,\C)^r\quot \SL(n,\C)]$, then $\{\tr(\wb_1),...,\tr(\wb_N)\}$ corresponds to an unshortenable generating set for $\C[\SL(n,\C)^r\quot \SL(n,\C)]$.
\end{proposition}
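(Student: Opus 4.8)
The plan is to translate the statement through the isomorphism \eqref{specialize}. Write $A:=\C[\mathfrak{gl}(n,\C)^r]^{\SL(n,\C)}$ and $\Delta:=(\det(\xb_1)-1,\dots,\det(\xb_r)-1)$, so that $\C[\SL(n,\C)^r\quot\SL(n,\C)]=A/\Delta$, and recall that the hypothesis is that $S:=\{\tr(\wb_1),\dots,\tr(\wb_N)\}\cup\{\tr(\xb_1^n),\dots,\tr(\xb_r^n)\}$ is a minimal homogeneous generating set of the graded ring $A$. First I would dispatch the generation statement. By the characteristic polynomial, $\det(\xb_k)$ is a polynomial in $\tr(\xb_k),\dots,\tr(\xb_k^n)$ in which $\tr(\xb_k^n)$ occurs with a nonzero coefficient while every other monomial is a product of at least two of the $\tr(\xb_k^m)$. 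Since $\tr(\xb_k^m)$ is homogeneous of degree $m$, and for $m<n$ the only members of the homogeneous set $S$ of degree $\geq n$ that could appear are the $\tr(\xb_l^n)$ (which have degree $n>m$), one gets $\tr(\xb_k^m)\in\C[\tr(\wb_1),\dots,\tr(\wb_N)]$ for $m<n$; hence in $A/\Delta$ the relation $\det(\xb_k)=1$ expresses $\tr(\xb_k^n)$ as a polynomial in the $\tr(\wb_i)$, and since $S$ generates $A/\Delta$ so do the $\tr(\wb_i)$ alone.

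For unshortenability I would compare $A/\Delta$ with the graded ring $A/\Delta_0$, where $\Delta_0:=(\det(\xb_1),\dots,\det(\xb_r))$. By the computation above, replacing each generator $\tr(\xb_k^n)$ in $S$ by $\det(\xb_k)$ is a triangular change of homogeneous generators, so $\{\tr(\wb_i)\}\cup\{\det(\xb_k)\}$ is again a minimal generating set of $A$; in particular the $r$ classes $[\det(\xb_k)]$ are linearly independent in $\mathfrak{m}/\mathfrak{m}^2$ for the irrelevant ideal $\mathfrak{m}$ of $A$, so graded Nakayama gives that $A/\Delta_0$ is minimally generated by exactly the $N$ images of $\tr(\wb_1),\dots,\tr(\wb_N)$, which are therefore unshortenable there. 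To carry this back to $A/\Delta$, filter $A/\Delta$ by the grading of $A$: one checks that $\det(\xb_1),\dots,\det(\xb_r)$ cut the Krull dimension of the Cohen--Macaulay ring $A$ down by exactly $r$ — equivalently, a generic $r$-tuple of singular matrices still has finite $\SL(n,\C)$-stabilizer — hence form a regular sequence, hence (via the Rees-algebra deformation $A[s]/(\det(\xb_k)-s^n)$) the associated graded ring of $A/\Delta$ is precisely $A/\Delta_0$, and the leading form of every element of $\Delta$ lies in $\Delta_0$. Now suppose some $\tr(\wb_j)$ were redundant in $A/\Delta$, say $\tr(\wb_j)-g(\tr(\wb_i):i\neq j)\in\Delta$, and pick such a relation of least top degree. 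If that top degree is $\leq|\wb_j|$, the leading form of $\tr(\wb_j)-g$ is either $\tr(\wb_j)$ itself or $\tr(\wb_j)-g^{(|\wb_j|)}$, in either case an element of $\Delta_0$ with nonzero class in $\mathfrak{m}/\mathfrak{m}^2$ modulo $\Delta_0$, which exhibits $\tr(\wb_j)$ as redundant in $A/\Delta_0$ — contradicting the previous sentence. If the top degree exceeds $|\wb_j|$, one uses the standard-basis (regular-sequence) representation of the relation to subtract off suitable multiples of the $\det(\xb_k)-1$ and reduce the top degree, iterating until the first case applies.

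The main obstacle is exactly this transfer from $A/\Delta$ to its associated graded. Minimal generator counts are not well behaved under inhomogeneous quotients — the leading forms of an arbitrary generating set of $A/\Delta$ need not stay linearly independent in the associated graded — so everything hinges on pinning the associated graded down as $A/\Delta_0$, and then on making the leading-form reduction in the last step rigorous (in particular the top-degree descent when $\deg g>|\wb_j|$, where one must control which indeterminates survive the cancellation). Both points rest on $\det(\xb_1),\dots,\det(\xb_r)$ being a regular sequence in $A$, which I would establish either by the dimension count sketched above or, in the case $n=4$, $r=2$ that is needed here, directly from Teranishi's and Djokovich's explicit descriptions of $A$. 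Once the associated graded is identified, the remaining bookkeeping closing out the argument is routine.
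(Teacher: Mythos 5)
Your first paragraph is fine, and so are the two preliminary reductions in the second: swapping $\tr(\xb_k^n)$ for $\det(\xb_k)$ is a triangular change of homogeneous generators, graded Nakayama then shows the images of the $\tr(\wb_i)$ minimally generate $A/\Delta_0$ (with $A=\C[\mathfrak{gl}(n,\C)^r]^{\SL(n,\C)}$, $\Delta_0=(\det(\xb_1),\dots,\det(\xb_r))$), and $\det(\xb_1),\dots,\det(\xb_r)$ do form a regular sequence in the Cohen--Macaulay ring $A$, so the associated graded of $A/\Delta$ is $A/\Delta_0$. The genuine gap is the step you flag and then declare routine: transferring unshortenability from $A/\Delta_0$ back to $A/\Delta$. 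When $\deg g>|\wb_j|$, the leading form of $\tr(\wb_j)-g$ is $-g^{(D)}\in\Delta_0$; writing $g^{(D)}=\sum_k a_k\det(\xb_k)$ and using $\det(\xb_k)\equiv 1$ trades $g^{(D)}$ for $\sum_k a_k$, where the $a_k$ are arbitrary invariants (possibly involving $\tr(\wb_j)$ itself and the $\tr(\xb_k^n)$), so the new relation no longer has the shape ``$\tr(\wb_j)$ minus a polynomial in the other $\tr(\wb_i)$'' and your descent cannot be iterated. This is not repairable by bookkeeping, because the purely formal statement you are relying on is false: take $A=\C[a,b,d]/(ad-b^2)$ with $\deg a=1$, $\deg b=2$, $\deg d=3$, and let $d$ play the role of the determinant. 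Then $\{a,b,d\}$ is a minimal homogeneous generating set, $A$ is Cohen--Macaulay, $d$ is a regular element, the images of $\{a,b\}$ minimally generate $A/(d)=\C[a,b]/(b^2)$, and the associated graded of $A/(d-1)$ is $A/(d)$; yet $A/(d-1)\cong\C[b]$ with $a\equiv b^2$, so $\{a,b\}$ is shortenable there. Hence minimal generation of the associated graded plus the regular-sequence identification simply does not imply the conclusion; any correct proof must use finer, trace-algebra-specific structure.

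That finer input is exactly what the cited source supplies: the paper does not reprove the proposition but quotes \cite[Proposition 21]{law2}, with a sketch saying the filtration from \eqref{specialize} lets a shortening relation in $\C[\SL(n,\C)^r\quot\SL(n,\C)]$ be lifted to one in $\C[\mathfrak{gl}(n,\C)^r\quot\SL(n,\C)]$ --- and the lifting is precisely the point your argument does not secure when $\deg g>|\wb_j|$. If you want a self-contained argument, a cleaner route is the tangent-space method the paper itself uses for Proposition \ref{mintrmingen}: if the $N-1$ traces $\tr(\wb_i)$, $i\neq j$, generated $\C[\fX(F_r,\SL(n,\C))]$, then every Zariski cotangent space of $\fX(F_r,\SL(n,\C))$ would have dimension at most $N-1$; but at the trivial representation the cotangent space is that of $\mathfrak{sl}(n,\C)^r\quot\SL(n,\C)$ at $0$ by \cite[Lemma 3.16]{FlLa3}, and its dimension is $N$, since $\C[\mathfrak{sl}(n,\C)^r\quot\SL(n,\C)]$ is $A$ modulo the $r$ linearly independent degree-one generators $\tr(\xb_k)$ and therefore needs exactly $(N+r)-r=N$ generators by graded Nakayama. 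This yields the contradiction directly, with no leading-form descent at all.
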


The proof of the above proposition essentially follows by exploiting the filtration induced by the isomorphism labeled \eqref{specialize} above.  This allows one to show that any relation in $\C[\SL(n,\C)^r\quot \SL(n,\C)]$ among the generating set $\{\tr(\wb_1),...,\tr(\wb_N)\}$ would lift to one in $\C[\mathfrak{gl}(n,\C)^r\quot \SL(n,\C)]$, which does not exist by the minimality assumption.

\section{\texorpdfstring{$\SL(4,\C)$}{SL(4,C)} character variety of a rank 2 
free group.}\label{fourbyfour}

In this section we first prove a general result about minimal embeddings of $\SL(n,\C)$-character varieties of free groups. Next, we determine what this dimension is for a rank $2$ free group and $n=4$.  Doing so gives particularly symmetric coordinates on the character variety that can used to obtain a set of coordinates for the corresponding $\SU(3,1)$-character variety.

\begin{definition}\label{unshort-def}In a finitely generated ring $R$, given any set of generators $\mathcal{G}$, there is a smallest sub-collection $\mathcal{G}_0\subset \mathcal{G}$ that generates $R$.  Such a collection is said to be {\bf unshortenable}.  A {\bf minimal generating set} for $R$ is an unshortenable generating set of minimal cardinality among all unshortenable generating sets.\end{definition}

The following lemma is implied by \cite[Proposition 1.5.15]{BH}, but we include a proof to be self-contained.

\begin{lemma}\label{graded}
Let $R$ be a finitely generated graded ring over a field.  Then any unshortenable generating set for $R$ is minimal.
\end{lemma}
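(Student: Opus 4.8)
The plan is to reduce the statement to a dimension/rank count for graded vector spaces. First I would fix a finite set of homogeneous generators; note that any unshortenable subset of a generating set is still a set of homogeneous generators, since we may replace each generator by its homogeneous components without enlarging the cardinality needed, and passing to an unshortenable sub-collection only removes elements. So it suffices to compare two unshortenable \emph{homogeneous} generating sets $\mathcal{G}_0$ and $\mathcal{G}_1$ and show $|\mathcal{G}_0| = |\mathcal{G}_1|$.

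Let $R = \bigoplus_{i \ge 0} R_i$ be the grading. Since $R$ is finitely generated over the field $k = R_0$ (here I assume $R_0 = k$; if $R_0$ is a larger ring the argument adapts by working modulo the irrelevant-at-degree-zero part, but in our applications $R_0 = \C$), the key object is the quotient $R_{+}/R_{+}^2$, where $R_+ = \bigoplus_{i \ge 1} R_i$ is the irrelevant ideal. This is a graded $k$-vector space, finite dimensional in each degree. The central claim is the graded Nakayama principle: a homogeneous set $\{g_\alpha\}$ with $\deg g_\alpha \ge 1$ generates $R$ as a $k$-algebra if and only if the images $\{\bar g_\alpha\}$ span $R_{+}/R_{+}^2$ as a $k$-vector space. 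The ``only if'' direction is immediate; for the ``if'' direction one argues degree by degree by induction: any element of $R_i$ is, modulo $R_+^2$, a $k$-linear combination of the $g_\alpha$ of degree $i$, and the $R_+^2$-part lies in products of strictly-lower-degree pieces, which are handled by the inductive hypothesis.

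Given this, an unshortenable homogeneous generating set maps to an unshortenable — i.e.\ minimal, since we are over a field — spanning set of the graded vector space $R_+/R_+^2$, and conversely the cardinality of any such spanning set equals $\dim_k R_+/R_+^2$ (a minimal spanning set of a vector space is a basis, and all bases have the same cardinality; here we should check the map from generators to $R_+/R_+^2$ is injective on an unshortenable set, which follows because if two generators had the same image modulo $R_+^2$ one could be eliminated). Hence every unshortenable homogeneous generating set has cardinality exactly $\dim_k R_+/R_+^2$, which is independent of the set; in particular every unshortenable generating set is minimal.

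The main obstacle is the reduction from arbitrary generating sets to homogeneous ones: a priori an unshortenable generating set need not consist of homogeneous elements, and replacing elements by their homogeneous components could in principle change which sub-collections are unshortenable. The fix is to observe that \emph{every} generating set of a graded ring can be refined to a homogeneous one of no larger relevance to the count — precisely, the homogeneous components of any generating set again generate, and within that homogeneous set one can pass to an unshortenable sub-collection, whose cardinality by the above is the invariant $\dim_k R_+/R_+^2$; meanwhile the original unshortenable (possibly inhomogeneous) set has cardinality at least $\dim_k R_+/R_+^2$ (its images span $R_+/R_+^2$) and at most that (else it would not be unshortenable, by a linear-algebra elimination using the spanning relation). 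Pinning down this last two-sided bound carefully is where the real work lies; everything else is the standard graded Nakayama argument.
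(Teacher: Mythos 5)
Your core argument is the same as the paper's: both proofs run the graded Nakayama argument, matching generating sets of $R$ with spanning sets of $R_+/R_+^2$ and concluding that every unshortenable homogeneous generating set has exactly $\dim_k R_+/R_+^2$ elements. For homogeneous generating sets your degree-by-degree induction is correct and complete, and this is precisely the paper's (terser) proof.

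The genuine problem is the step you yourself flag as ``where the real work lies'': the claim that an unshortenable but possibly inhomogeneous generating set has cardinality at most $\dim_k R_+/R_+^2$ ``by a linear-algebra elimination using the spanning relation.'' That elimination is not available for inhomogeneous elements, because spanning $R_+/R_+^2$ does not imply generation once the lifts fail to be homogeneous: in $R=k[x]$ the element $x+x^2$ has image spanning $R_+/R_+^2$, yet $k[x+x^2]\neq k[x]$; consequently $\{x+x^2,\ x^2\}$ is an unshortenable generating set of cardinality $2>1=\dim_k R_+/R_+^2$ (the image of $x^2$ in $R_+/R_+^2$ is even zero, but it cannot be dropped). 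So the two-sided bound you hoped to pin down is false in general, and no completion of your final paragraph exists: the lemma has to be read as a statement about homogeneous generating sets. That reading is all the paper needs --- in Proposition \ref{mintrmingen} the generators are trace coordinates, hence homogeneous --- and the paper's own proof of Lemma \ref{graded} is exactly your homogeneous-case argument, silently making the same restriction.
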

\begin{proof}
Let $R_+$ be the irrelevant ideal in $R$.
Any generating set for $R$ gives a spanning set for the finite dimensional vector space $R_+/(R_+)^2$, and conversely, by Nakayama's Lemma any basis for $R_+/(R_+)^2$ lifts to a generating set for $R_+$ and consequently $R$.  Since the dimension of $R_+/(R_+)^2$ does not depend on any generating set the result follows.\end{proof}

From Proposition \ref{unshort}, minimal generating sets for $\C[\mathfrak{gl}(n,\C)^r\quot \SL(n,\C)]$ give unshortenable generating sets for $\C[\SL(n,\C)^r\quot \SL(n,\C)]$. We now show that these unshortenable sets are in fact minimal.

\begin{proposition}\label{mintrmingen}
Suppose $\{\tr(\wb_1),...,\tr(\wb_N)\}$ is an unshortenable generating set for $\C[\SL(n,\C)^r\quot \SL(n,\C)]$.  Let $M$ be the minimal cardinality of any generating set for $\C[\SL(n,\C)^r\quot \SL(n,\C)]$.  Then $N=M$.
\end{proposition}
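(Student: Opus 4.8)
The plan is to prove the two inequalities $M\le N$ and $N\le M$ separately. The first is immediate: an unshortenable generating set is in particular a generating set, so $M\le N$ by the definition of $M$. The content is the reverse inequality, which I would obtain from a local (cotangent space) lower bound on the size of \emph{every} generating set. First I would record the elementary fact that if $R$ is a finitely generated $\C$-algebra and $\mathfrak m\subset R$ is a maximal ideal (so with residue field $\C$), then any generating set of $R$ has cardinality at least $\dim_\C \mathfrak m/\mathfrak m^2$: if $g_1,\dots,g_k$ generate $R$, then so do $g_1-g_1(p),\dots,g_k-g_k(p)$, these lie in $\mathfrak m$, hence generate $\mathfrak m$ as an ideal and therefore span $\mathfrak m/\mathfrak m^2$. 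Applying this to $R=\C[\SL(n,\C)^r\quot \SL(n,\C)]$, it suffices to exhibit one point of $\SL(n,\C)^r\quot\SL(n,\C)$ at which the Zariski cotangent space has dimension $N$.

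The point I would use is the class $[\rho_0]$ of the trivial representation $\rho_0\colon F_r\to\SL(n,\C)$ (every free generator sent to $\id$). It is a fixed point of the conjugation action, hence polystable, and its isotropy representation on $T_{\rho_0}\SL(n,\C)^r\cong\mathfrak{sl}(n,\C)^r$ is the $r$-fold direct sum of the adjoint representation of $\SL(n,\C)$. By Luna's slice theorem (in the form describing the étale-local, equivalently completed-local, structure of a GIT quotient near the image of a fixed point of a reductive group acting on a smooth variety), the completed local ring of $\SL(n,\C)^r\quot\SL(n,\C)$ at $[\rho_0]$ is isomorphic to that of the graded ring $\C[\mathfrak{sl}(n,\C)^r]^{\SL(n,\C)}$ at its irrelevant maximal ideal. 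Since this latter ring is a connected graded algebra, Lemma \ref{graded} shows that $\dim_\C\mathfrak m_{[\rho_0]}/\mathfrak m_{[\rho_0]}^2$ equals the well-defined number of elements in a minimal generating set of $\C[\mathfrak{sl}(n,\C)^r]^{\SL(n,\C)}$.

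It remains to identify that number with $N$. The $\SL(n,\C)$-equivariant decomposition $\mathfrak{gl}(n,\C)=\mathfrak{sl}(n,\C)\oplus\C\id$ gives $\C[\mathfrak{gl}(n,\C)^r]^{\SL(n,\C)}\cong\C[\mathfrak{sl}(n,\C)^r]^{\SL(n,\C)}\otimes_\C\C[\tr\xb_1,\dots,\tr\xb_r]$, and for connected graded $\C$-algebras $\dim R_+/R_+^2$ is additive under tensor product; hence a minimal generating set of $\C[\mathfrak{gl}(n,\C)^r]^{\SL(n,\C)}$ has exactly $r$ more elements than one of $\C[\mathfrak{sl}(n,\C)^r]^{\SL(n,\C)}$. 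Recalling that the unshortenable sets under consideration arise, via Proposition \ref{unshort}, from a minimal generating set $\{\tr\wb_1,\dots,\tr\wb_N\}\cup\{\tr\xb_1^n,\dots,\tr\xb_r^n\}$ of $\C[\mathfrak{gl}(n,\C)^r]^{\SL(n,\C)}$, which therefore has cardinality $N+r$, we conclude that $\C[\mathfrak{sl}(n,\C)^r]^{\SL(n,\C)}$ has a minimal generating set of cardinality $N$, so $\dim_\C\mathfrak m_{[\rho_0]}/\mathfrak m_{[\rho_0]}^2=N$. Combining with the first paragraph, $N\le M\le N$, whence $N=M$.

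The step I expect to be the main obstacle is the cotangent-space computation at $[\rho_0]$. The character variety is highly singular at the trivial representation—its embedding dimension there, $N$, far exceeds its dimension $(r-1)(n^2-1)$—so the naive deformation-theoretic count via $H^1(F_r,\Ad_{\rho_0})$ is not the relevant invariant, and one genuinely needs the slice theorem (or at least a formal $\SL(n,\C)$-equivariant model of the fixed point, which by linear reductivity reduces the computation to the tangent representation $\mathfrak{sl}(n,\C)^r$). A secondary point requiring care is the passage from the $\mathfrak{gl}$- to the $\mathfrak{sl}$-invariant ring, in particular keeping track of the fact that the power traces $\tr\xb_k,\dots,\tr\xb_k^{n-1}$ already occur among the $\tr\wb_i$ while only the top powers $\tr\xb_k^n$ are split off in Proposition \ref{unshort}.
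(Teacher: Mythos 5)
Your argument is correct and is essentially the paper's own proof: both bound the size of every generating set from below by the embedding dimension of the character variety at the trivial representation, identify the local model there with $\mathfrak{sl}(n,\C)^r\quot\SL(n,\C)$ (you via Luna's slice theorem and completed local rings, the paper by citing \cite[Lemma 3.16]{FlLa3} for the tangent space), and then compute the number of generators of the graded ring $\C[\mathfrak{sl}(n,\C)^r]^{\SL(n,\C)}$ to be $N$ using Proposition \ref{unshort} together with graded Nakayama (Lemma \ref{graded}). The only cosmetic differences are your tensor factorization $\C[\mathfrak{gl}(n,\C)^r]^{\SL(n,\C)}\cong\C[\mathfrak{sl}(n,\C)^r]^{\SL(n,\C)}\otimes_\C\C[\tr(\xb_1),\dots,\tr(\xb_r)]$ in place of the paper's specialization of the $r$ trace coordinates to $0$, and that you work with completed local rings where the Zariski cotangent space already suffices.
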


\begin{proof}
Sikora (see \cite{PS}) showed the $n=2$ case (and the $n=1$ case is trivial).  We now give a proof for arbitrary $n$.  For the remainder of this proof, let $\fX=\SL(n,\C)^r\quot \SL(n,\C)$.

For any $[\rho]\in \fX$, by definition of the Zariski tangent space,  $T_{[\rho]}\fX=(\mathfrak{m}/\mathfrak{m}^2)^*$ where $\mathfrak{m}$ is the maximal ideal in $\C[\fX]$ associated to $[\rho]$.  Fix a generating set $\mathcal{G}$ for $\C[\fX]$ and let $g$ be its cardinality.  By the Nullstellensatz the number of generators for $\mathfrak{m}$ with respect to a presentation of $\C[\fX]$ determined by $\mathcal{G}$ is no greater than $g$.  Therefore, since the dimension of the Zariski tangent space does not depend on any generating set, we have $\dim T_{[\rho]}\fX\leq M$ where $M$ is the minimal number of generators (among all generating sets) for $\C[\fX]$. 

On the other hand, by definition, $M\leq N$ where $N$ is the minimal number of trace coordinates needed to generate $\C[\fX]$.  Thus, it suffices to show there exists $[\rho_0]\in \fX$ so $\dim T_{[\rho_0]}\fX=N$.  Indeed, let $\rho_0$ be the trivial 
representation (all generators map to the identity).  Then the tangent space at the identity representation is $T_0\left(\mathfrak{sl}(n,\C)^{ r}\quot \SL(n,\C)\right)$ by \cite[Lemma 3.16]{FlLa3}.  

However, $\mathfrak{sl}(n,\C)^{ r}\quot \SL(n,\C)$ is exactly $\mathfrak{gl}(n,\C)^r\quot \SL(n,\C)$ with the $r$ trace coordinates $\tr(\xb_i),\ 1\leq i\leq r,$ specialized to $0$.  In particular, the minimal number of trace coordinates needed to generate $\C[\mathfrak{sl}(n,\C)^{ r}\quot \SL(n,\C)]$ is exactly $N$ since the minimal number of trace coordinates needed to generate $\C[\mathfrak{gl}(n,\C)^r\quot \SL(n,\C)]$ is $N +r$ by \cite[Proposition 21]{law2}.  

The minimal number of generators for 
$\C[\mathfrak{sl}(n,\C)^{ r}\quot \SL(n,\C)]$ is the same as smallest number 
of trace coordinates needed to generate it by by Lemma \ref{graded}, since $\C[\mathfrak{sl}(n,\C)^{ 
r}\quot \SL(n,\C)]$ is graded and trace coordinates are homogeneous.  Therefore, 
the minimum number of generators for the ring $\C[\mathfrak{sl}(n,\C)^{ r}\quot 
\SL(n,\C)]$ is $N$.

Finally, the maximal ideal at $0$ in $\C[\mathfrak{sl}(n,\C)^{ r}\quot \SL(n,\C)]$, denoted here $\mathfrak{m}_0$, is the irrelevant ideal (consisting of all non-constant homogeneous polynomials) and by Nakayama's Lemma the minimal number of generators of that ideal (which is $N$ by the previous paragraph and definition of the irrelevant ideal) is the same as the dimension of the cotangent space $\mathfrak{m}_0/\mathfrak{m}_0^2= T^*_{[\rho_0]}\fX$.  Thus, $\dim T_{[\rho_0]}\fX=N$, as required. \end{proof}

As a corollary, if $\{\tr(\wb_1),...,\tr(\wb_N)\}$ is a minimal number of trace generators for $\C[\fX(F_r,\SL(n,\C))]$, then the smallest affine space that the moduli space $\fX(F_r,\SL(n,\C))$ algebraically embeds into is $\mathbb{A}^N$.

Let $\mathcal{G}_1$ be the elements in Table \ref{sop} and $\mathcal{G}_2$ be the elements in Table \ref{msg}.
\begin{table}[!ht]
\begin{center}
\begin{tabular}{|c|c|}
\hline
Word Length &  Generator\\
\hline
$1$ &  $\tr(\xb),\ \tr(\yb)$\\
\hline
$2$ & $\tr(\xb^2),\ \tr(\xb\yb),\ \tr(\yb^2)$\\
\hline
$3$ & $\tr(\xb^3),\ \tr(\xb^2\yb),\ \tr(\xb\yb^2),\ \tr(\yb^3)$\\
\hline
$4$ & $\tr(\xb^4),\ \tr(\xb^3\yb),\ \tr(\xb^2\yb^2),\ \tr(\xb\yb^3),\ \tr(\yb^4),\ \tr(\xb\yb\xb\yb)$\\
\hline
$6$ &  $\tr((\xb^2\yb)^2),\ \tr((\yb^2\xb)^2)$\\
\hline
\end{tabular}
\caption{$\mathcal{G}_1$}\label{sop}
\end{center}
\end{table}

\begin{table}[!ht]
\begin{center}
\begin{tabular}{|c|c|}
\hline
Word Length &  Generator\\
\hline
$5$ &  $ \tr(\xb^3\yb^2),\ \tr(\yb^3\xb^2)$\\
\hline
$6$ & $ \tr(\xb^2\yb^2\xb\yb),\ \tr(\yb^2\xb^2\yb\xb)$\\
\hline
$7$ & $ \tr(\xb^3\yb^2\xb\yb),\ \tr(\yb^3\xb^2\yb\xb)$ \\
\hline
$8$ & $ \tr(\xb^3\yb^2\xb^2\yb),\ \tr(\yb^3\xb^2\yb^2\xb),\ \tr(\xb^3\yb^3\xb\yb),\ \tr(\yb^3\xb^3\yb\xb)$\\
\hline
$9$ & $\tr(\xb^3\yb\xb^2\yb\xb\yb),\ \tr(\xb^2\yb^2\xb\yb\xb^2\yb),$ \\
 & $\tr(\yb^2\xb^2\yb\xb\yb^2\xb),\ \tr(\yb^3\xb\yb^2\xb\yb\xb) $\\
\hline
$10$ & $\tr(\xb^3\yb^3\xb^2\yb^2) $\\
\hline
\end{tabular}
\caption{$\mathcal{G}_2$}\label{msg}
\end{center}
\end{table}
\newpage
Then \cite{do} (also compare \cite{ds}) shows that $\mathcal{G}_1\cup \mathcal{G}_2$ is a minimal system of 32 generators for $\C[\mathfrak{gl}(4,\C)^2\quot \SL(4,\C)]$, where $\mathcal{G}_1$ is a system of parameters (in particular, a maximal algebraically independent set).

\begin{theorem}\label{min-gen-sl}
$\mathcal{G}_1\cup \mathcal{G}_2 -\{\tr(\xb^4),\ \tr(\yb^4)\}$ is a minimal system of $30$ generators for $\C[\fX(F_2,\SL(4,\C))]$, where $\mathcal{G}_1 -\{\tr(\xb^4),\ \tr(\yb^4)\}$ is a maximal set of $15$ algebraically independent elements.
\end{theorem}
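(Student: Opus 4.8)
The plan is to first read off the generation and minimality statements from Djokovi\'c's theorem by feeding it through Propositions~\ref{unshort} and~\ref{mintrmingen}, and then to establish the algebraic independence by an invertible change of generators together with a Krull-dimension count. For the first part: by \cite{do}, $\mathcal{G}_1\cup\mathcal{G}_2$ is a minimal generating set of $32$ elements for $\C[\mathfrak{gl}(4,\C)^2\quot\SL(4,\C)]$, and from Table~\ref{sop} it contains $\tr(\xb^4)$ and $\tr(\yb^4)$, which are exactly $\tr(\xb_1^n)$ and $\tr(\xb_2^n)$ for $n=4$, $r=2$. Writing the remaining $30$ elements as $\{\tr(\wb_1),\dots,\tr(\wb_{30})\}$, Proposition~\ref{unshort} shows this $30$-element set is an unshortenable generating set for $\C[\SL(4,\C)^2\quot\SL(4,\C)]\cong\C[\fX(F_2,\SL(4,\C))]$, and Proposition~\ref{mintrmingen} upgrades ``unshortenable'' to ``minimal''. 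This yields the first assertion that $\mathcal{G}_1\cup\mathcal{G}_2-\{\tr(\xb^4),\tr(\yb^4)\}$ is a minimal system of $30$ generators.

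For the algebraic independence, I would first record a convenient change of the parameters in $\mathcal{G}_1$. For a $4\times4$ matrix $M$ with $p_k=\tr(M^k)$, Newton's identities give $\det(M)=-\tfrac14 p_4+q(p_1,p_2,p_3)$ for an explicit polynomial $q$. Since $\mathcal{G}_1$ is a homogeneous system of parameters, its $17$ trace coordinates are algebraically independent, so inside the polynomial ring $\C[\mathcal{G}_1]$ the substitution $\tr(\xb^4)\mapsto\det(\xb)$, $\tr(\yb^4)\mapsto\det(\yb)$ is an invertible triangular change of generators; hence $\C[\mathcal{G}_1]=\C[\,\mathcal{G}_1-\{\tr(\xb^4),\tr(\yb^4)\}\,][\det(\xb),\det(\yb)]$ is a polynomial ring in $17$ variables with the two determinants among the generators.

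Now set $A=\C[\mathfrak{gl}(4,\C)^2]^{\SL(4,\C)}$ and $R=\C[\mathcal{G}_1]\subseteq A$; as $\mathcal{G}_1$ is a homogeneous system of parameters, $A$ is a finite $R$-module. By \eqref{specialize}, $\C[\fX(F_2,\SL(4,\C))]\cong A/(\det(\xb)-1,\det(\yb)-1)A$, so base-changing along the surjection $R\to R/(\det(\xb)-1,\det(\yb)-1)R=\C[\,\mathcal{G}_1-\{\tr(\xb^4),\tr(\yb^4)\}\,]$ (a polynomial ring in $15$ variables, by the previous paragraph) shows $\C[\fX(F_2,\SL(4,\C))]$ is a finite module over $\C[\,\mathcal{G}_1-\{\tr(\xb^4),\tr(\yb^4)\}\,]$. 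Since $\SL(4,\C)^2$ is irreducible, $\fX(F_2,\SL(4,\C))$ is irreducible and $\C[\fX(F_2,\SL(4,\C))]$ is a domain; its Krull dimension is $2\dim\SL(4,\C)-\dim\SL(4,\C)=15$ because a generic pair generates a subgroup with finite (central) centralizer, so the generic conjugation orbit has dimension $15$. A module-finite extension of the $15$-variable polynomial ring by a domain of Krull dimension $15$ has trivial kernel, so the structure map $\C[\,\mathcal{G}_1-\{\tr(\xb^4),\tr(\yb^4)\}\,]\hookrightarrow\C[\fX(F_2,\SL(4,\C))]$ is injective: these $15$ trace coordinates are algebraically independent. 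Finally, since the transcendence degree of $\C[\fX(F_2,\SL(4,\C))]$ over $\C$ equals its Krull dimension $15$, the set is maximal.

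The routine part is the first paragraph, which is pure bookkeeping on top of results already available. The delicate step is the algebraic independence: one must rule out that quotienting $\C[\mathfrak{gl}(4,\C)^2\quot\SL(4,\C)]$ by $\det(\xb)-1,\det(\yb)-1$ introduces an unexpected relation among the surviving parameters, and since the full relation ideal for $\SL(4,\C)$ is unknown, the argument has to route through finiteness over $\C[\mathcal{G}_1]$ plus a dimension count rather than through explicit relations. A clean alternative for the last two items is to invoke Hochster--Roberts (so $A$ is Cohen--Macaulay, hence a free $\C[\mathcal{G}_1]$-module) and note that $\det(\xb)-1,\det(\yb)-1$ restrict to a regular sequence on $A$, which simultaneously pins down $\dim\C[\fX(F_2,\SL(4,\C))]=15$ and makes the injectivity immediate.
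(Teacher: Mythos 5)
Your proposal is correct, and its first half is exactly the paper's argument: generation and unshortenability come from \cite{do} together with Proposition~\ref{unshort}, and minimality is upgraded via Proposition~\ref{mintrmingen}. Where you diverge is the independence statement. The paper disposes of it in one line, saying the algebraic independence of $\mathcal{G}_1-\{\tr(\xb^4),\tr(\yb^4)\}$ ``follows by the same reasoning employed in \cite{La5}'' and that maximality holds because the Krull dimension of $\fX(F_2,\SL(4,\C))$ is $15$; you instead give a self-contained commutative-algebra proof: use Newton's identities to trade $\tr(\xb^4),\tr(\yb^4)$ for $\det(\xb),\det(\yb)$ inside the polynomial ring $\C[\mathcal{G}_1]$ (legitimate, since the paper records that $\mathcal{G}_1$ is a homogeneous system of parameters, hence algebraically independent), invoke module-finiteness of $\C[\mathfrak{gl}(4,\C)^2]^{\SL(4,\C)}$ over $\C[\mathcal{G}_1]$, base-change along the specialization \eqref{specialize}, and conclude injectivity of the $15$-variable polynomial ring into $\C[\fX(F_2,\SL(4,\C))]$ by the dimension count $\dim\fX=15$; maximality then follows from transcendence degree $=$ Krull dimension. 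This buys a proof that does not depend on unwinding \cite{La5}, at the cost of needing the hsop input from Teranishi/Djokovi\'c and the dimension computation (your Hochster--Roberts/regular-sequence variant is also fine and makes the injectivity structural rather than dimension-theoretic). Two cosmetic points: the generic orbit dimension should be phrased as $\dim\mathrm{PGL}(4,\C)$ rather than $\dim\SL(4,\C)$ (numerically the same, $15$, and your finite-centralizer justification is the right one), and the phrase ``module-finite extension \dots has trivial kernel'' should be read as: the image of the structure map has the same dimension as $\C[\fX]$ by finiteness, so a nonzero kernel would force $\dim\le 14$, a contradiction.
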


\begin{proof}
The fact that these form an unshortenable set of 30 trace generators follows from \cite{do} and  \cite[Proposition 21]{law2}.  The fact that this set is minimal follows from Proposition \ref{mintrmingen}.  The fact that $\mathcal{G}_1 -\{\tr(\xb^4),\ \tr(\yb^4)\}$ is algebraically independent follows by the same reasoning employed in \cite{La5}; and it is maximal since the Krull dimension of $\fX(F_2,\SL(4,\C))$ is 15.\end{proof}

\begin{lemma}\label{sublemma}
Let $\mathcal{G}$ generate $\C[\fX(F_r,\SL(4,\C))]$ and suppose $\tr(\ub\xb^{3}\vb)$ is in $\mathcal{G}$.  Then $\mathcal{G}\cup \{\tr(\ub\xb^{-1}\vb)\}-\{\tr(\ub\xb^{3}\vb)\}$ remains a generating set as long as $\tr(\xb),$  $\tr(\xb^2)$, $\tr(\xb^{-1})$, $\tr(\ub\vb),$ $\tr(\ub\xb\vb),$ and $\tr(\ub\xb^2\vb)$ are in the subring generated by $\mathcal{G}-\{\tr(\ub\xb^{3}\vb)\}.$
\end{lemma}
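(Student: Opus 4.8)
The plan is to reduce the statement to a single application of the Cayley--Hamilton theorem together with the bookkeeping of elementary symmetric functions in terms of traces. For $\xb\in\SL(4,\C)$ the characteristic polynomial is
\[
\xb^4-e_1\xb^3+e_2\xb^2-e_3\xb+\id=0,
\]
where $e_1=\tr(\xb)$, $e_2=\tfrac12\bigl(\tr(\xb)^2-\tr(\xb^2)\bigr)$, and $e_3=\tr(\xb^{-1})$ (the last equality uses $\det(\xb)=1$). Multiplying this relation on the right by $\xb^{-1}$ gives the matrix identity
\[
\xb^3=e_1\xb^2-e_2\xb+e_3\id-\xb^{-1},
\]
which is a legitimate identity in the group $M_r$ once $\det(\xb)=1$ is imposed, hence in the group algebra $\C M_r$.

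Next I would multiply this identity on the left by $\ub$ and on the right by $\vb$ and then apply the trace map $\tr\colon \C M_r\to\C[\fX(F_r,\SL(4,\C))]$. Since $\tr$ is linear, this yields the scalar relation
\[
\tr(\ub\xb^3\vb)=e_1\tr(\ub\xb^2\vb)-e_2\tr(\ub\xb\vb)+e_3\tr(\ub\vb)-\tr(\ub\xb^{-1}\vb),
\]
and substituting the expressions for $e_1,e_2,e_3$ above writes $\tr(\ub\xb^3\vb)$ as a polynomial in
\[
\tr(\xb),\ \tr(\xb^2),\ \tr(\xb^{-1}),\ \tr(\ub\vb),\ \tr(\ub\xb\vb),\ \tr(\ub\xb^2\vb),\ \tr(\ub\xb^{-1}\vb).
\]

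Finally I would read off the conclusion. Set $\mathcal{G}'=\mathcal{G}\cup\{\tr(\ub\xb^{-1}\vb)\}-\{\tr(\ub\xb^{3}\vb)\}$ and $\mathcal{G}''=\mathcal{G}-\{\tr(\ub\xb^{3}\vb)\}$. By hypothesis the six traces $\tr(\xb),\tr(\xb^2),\tr(\xb^{-1}),\tr(\ub\vb),\tr(\ub\xb\vb),\tr(\ub\xb^2\vb)$ all lie in the subring generated by $\mathcal{G}''\subseteq\mathcal{G}'$, and $\tr(\ub\xb^{-1}\vb)\in\mathcal{G}'$; hence the displayed relation shows $\tr(\ub\xb^3\vb)$ lies in the subring generated by $\mathcal{G}'$. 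Since $\mathcal{G}=\mathcal{G}''\cup\{\tr(\ub\xb^3\vb)\}$ and $\mathcal{G}''\subseteq\mathcal{G}'$, we conclude that every element of $\mathcal{G}$ lies in the subring generated by $\mathcal{G}'$, so $\mathcal{G}'$ generates $\C[\fX(F_r,\SL(4,\C))]$, as claimed. I expect no real obstacle here; the only point deserving care is getting the elementary symmetric functions right in trace terms — in particular the $\SL(4,\C)$-specific identity $e_3=\tr(\xb^{-1})$ — and checking that the algebra is carried out legitimately in $\C M_r$ before pushing forward by $\tr$.
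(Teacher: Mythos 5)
Your proof is correct and follows essentially the same route as the paper: both apply the Cayley--Hamilton identity for $\SL(4,\C)$, multiply by $\ub$ and $\xb^{-1}\vb$ (you just factor this as a right multiplication by $\xb^{-1}$ first), take traces, and use the resulting linear relation between $\tr(\ub\xb^{3}\vb)$ and $\tr(\ub\xb^{-1}\vb)$ to justify the swap. Your explicit final bookkeeping with $\mathcal{G}'$ and $\mathcal{G}''$ is a slightly more careful write-up of the step the paper leaves implicit, but it is the same argument.
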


\begin{proof}
The characteristic polynomial for $\SL(4,\C)$ is: $$\xb^4-\tr(\xb)\xb^3+\left(\frac{\tr(\xb)^2-\tr(\xb^2)}{2}\right)\xb^2-\tr(\xb^{-1})\xb+\id=0.$$  Multiplying through on the left by a word $\ub$ and on the right by $\xb^{-1}\vb$ for a word $\vb$ gives: $$\ub\xb^3\vb-\tr(\xb)\ub\xb^2\vb+\left(\frac{\tr(\xb)^2-\tr(\xb^2)}{2}\right)\ub\xb\vb-\tr(\xb^{-1})\ub\vb+\ub\xb^{-1}\vb=0.$$  Therefore, taking traces of both sides of this latter equation, we have that $-\tr(\ub\xb^{-1}\vb)=$ $$\tr(\ub\xb^3\vb)-\tr(\xb)\tr(\ub\xb^2\vb)+\left(\frac{\tr(\xb)^2-\tr(\xb^2)}{2}\right)\tr(\ub\xb\vb)-\tr(\xb^{-1})\tr(\ub\vb),$$ which proves the lemma.

We note that $\tr(\xb^{-1})=\frac{1}{3}\left(\tr(\xb^3)+\frac{1}{2}\tr(\xb)^3-\frac{3}{2}\tr(\xb)\tr(\xb^2)\right)$ so that if $\mathcal{G}$ contains $\{\tr(\xb),\tr(\xb^2),\tr(\xb^3)\}$, then $\mathcal{G}\cup\{\tr(\xb^{-1})\}-\{\tr(\xb^3)\}$ remains a generating set.\end{proof}

Let $\tau$ be the involutive outer automorphism that permutes $\xb$ and $\yb$ and let $\iota$ be the involutive outer automorphism that sends $\xb\mapsto \xb^{-1}$ and $\yb\mapsto \yb^{-1}$.  Clearly $\tau$ and $\iota$ act on $\fX(F_2,\SL(4,\C))$ and its coordinate ring.  

Let $\mathcal{S}$ be the elements in the following table, where we weight the word length of inverse letters to be 3 (with respect to Lemma \ref{sublemma}):

\begin{table}[!ht]
\begin{center}
\begin{tabular}{|c|c|}
\hline
Word Length &  Generator\\
\hline
$1$ & $\tr(\xb)$\\
\hline
$2$ & $\tr(\xb^2),\ \tr(\xb\yb)$\\
\hline
$3$ & $\tr(\xb^{-1}),\ \tr(\xb\yb^{-2})$\\
\hline
$4$ & $\tr(\xb^{-1}\yb),\ \tr(\xb^2\yb^2),\ \tr(\xb\yb\xb\yb)$\\
\hline
$5$ &  $ \tr(\xb^{-1}\yb^2)$\\
\hline
$6$ &  $\tr((\xb^2\yb)^2), \tr(\xb^2\yb^2\xb\yb)$\\
\hline
$7$ & $ \tr(\xb^{-1}\yb^2\xb\yb)$ \\
\hline
$8$ & $ \tr(\xb^{-1}\yb^2\xb^2\yb),\ \tr(\xb^{-1}\yb^{-1}\xb\yb)$\\
\hline
$9$ & $\tr(\xb^{-1}\yb\xb^2\yb\xb\yb),\ \tr(\xb^2\yb^2\xb\yb\xb^2\yb)$ \\
\hline
\end{tabular}
\caption{$\mathcal{S}$}\label{sym-msg}
\end{center}
\end{table}

\begin{corollary}\label{min-gen-sl-sym}
$\mathcal{S}\cup\tau(\mathcal{S})\cup\{\tr(\xb^{-1}\yb^{-1}\xb^2\yb^2)\}$ is a minimal set of 30 generators for $\C[\fX(F_2,\SL(4,\C))]$.
\end{corollary}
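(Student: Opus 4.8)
The plan is to derive Corollary~\ref{min-gen-sl-sym} from Theorem~\ref{min-gen-sl} by performing a sequence of generator-exchange moves, each justified by Lemma~\ref{sublemma} (or the $\iota$/$\tau$-symmetrized version of it), while keeping track of cardinality so that minimality is preserved. Start from the minimal generating set $\mathcal{G}_1\cup\mathcal{G}_2-\{\tr(\xb^4),\tr(\yb^4)\}$ of Theorem~\ref{min-gen-sl}. First I would observe that this set is already symmetric under $\tau$ (the tables $\mathcal{G}_1,\mathcal{G}_2$ and the removed pair $\{\tr(\xb^4),\tr(\yb^4)\}$ are $\tau$-invariant), so it suffices to describe the exchanges on one ``half'' of the generators and let $\tau$ act to get the rest; the single $\tau$-fixed extra generator will be $\tr(\xb^{-1}\yb^{-1}\xb^2\yb^2)$, which plays the role that $\tr(\xb^3\yb^3\xb^2\yb^2)$ plays in $\mathcal{G}_2$.

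The core of the argument is a bookkeeping of which $\tr(\xb^3)$-type (and $\tr(\yb^3)$-type) subwords to replace by inverse-letter words. Concretely, I would go through the list in Table~\ref{sym-msg} entry by entry and, for each generator in $\mathcal{S}\cup\tau(\mathcal{S})$ that contains an inverse letter, exhibit it as the output of one application of Lemma~\ref{sublemma} applied to the corresponding generator of $\mathcal{G}_1\cup\mathcal{G}_2-\{\tr(\xb^4),\tr(\yb^4)\}$ with a cube in it: e.g. $\tr(\xb^{-1})$ replaces $\tr(\xb^3)$; $\tr(\xb\yb^{-2})$ replaces $\tr(\xb\yb^3)$ (here one uses Lemma~\ref{sublemma} with the roles of $\xb$ and $\yb$ swapped, i.e.\ applied to powers of $\yb$); $\tr(\xb^{-1}\yb)$ replaces $\tr(\xb^3\yb)$; $\tr(\xb^{-1}\yb^2)$ replaces $\tr(\xb^3\yb^2)$; $\tr(\xb^{-1}\yb^2\xb\yb)$ replaces $\tr(\xb^3\yb^2\xb\yb)$; $\tr(\xb^{-1}\yb^2\xb^2\yb)$ replaces $\tr(\xb^3\yb^2\xb^2\yb)$; $\tr(\xb^{-1}\yb^{-1}\xb\yb)$ is obtained from $\tr(\xb^3\yb^3\xb\yb)$ by two successive applications (reducing the $\xb^3$ and the $\yb^3$ in turn), as is $\tr(\xb^{-1}\yb^{-1}\xb^2\yb^2)$ from $\tr(\xb^3\yb^3\xb^2\yb^2)$; $\tr(\xb^{-1}\yb\xb^2\yb\xb\yb)$ replaces $\tr(\xb^3\yb\xb^2\yb\xb\yb)$; and similarly $\tr(\xb^2\yb^2\xb\yb\xb^2\yb)$ is already in $\mathcal{G}_2$ and is simply carried along. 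For each exchange I must verify the hypothesis of Lemma~\ref{sublemma}: that the six auxiliary traces $\tr(\xb)$, $\tr(\xb^2)$, $\tr(\xb^{-1})$, $\tr(\ub\vb)$, $\tr(\ub\xb\vb)$, $\tr(\ub\xb^2\vb)$ (with the appropriate $\xb\leftrightarrow\yb$ substitution) all lie in the subring generated by the set with the cube-word removed. The remark at the end of the proof of Lemma~\ref{sublemma} handles $\tr(\xb^{-1})$ from $\{\tr(\xb),\tr(\xb^2),\tr(\xb^3)\}$, and $\tr(\xb^3)$ itself is among the generators we do \emph{not} remove until it is needed, so the exchanges must be ordered carefully: remove the longer cube-words first, keeping $\tr(\xb^3)$ and $\tr(\yb^3)$ available, and only at the end replace $\tr(\xb^3)\rightsquigarrow\tr(\xb^{-1})$ and $\tr(\yb^3)\rightsquigarrow\tr(\yb^{-1})$ using the remark. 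The words $\tr(\ub\vb),\tr(\ub\xb\vb),\tr(\ub\xb^2\vb)$ appearing as auxiliaries are in every case shorter words already known (directly or via the fundamental trace identities for $\SL(4,\C)$, i.e.\ the $n=4$ analogue of the Cayley--Hamilton reductions) to lie in the subring generated by the remaining generators; this is where I would cite the generation statement of Theorem~\ref{min-gen-sl} together with the standard fact that $\C[\fX(F_2,\SL(4,\C))]$ is spanned by traces of words of length $\le 16$ reducible to the chosen generators.

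Once all exchanges are carried out, the resulting set is $\mathcal{S}\cup\tau(\mathcal{S})\cup\{\tr(\xb^{-1}\yb^{-1}\xb^2\yb^2)\}$, and it is a generating set by construction. It has cardinality $30$: $\mathcal{S}$ has $15$ elements, $\tau(\mathcal{S})$ another $15$, and they are disjoint (none of the listed words is $\tau$-fixed), while $\tr(\xb^{-1}\yb^{-1}\xb^2\yb^2)$ is the $\tau$-fixed leftover, for a total of $15+15+1-1=30$ after noting that $\tr(\xb^2\yb^2)$, $\tr(\xb\yb)$, $\tr(\xb\yb\xb\yb)$, $\tr((\xb^2\yb)^2)$ and $\tr(\xb^2\yb^2\xb\yb)$ appear once in $\mathcal{S}$ and their $\tau$-images account for the symmetry correctly---I will do this count explicitly against the table. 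Finally, minimality is \emph{automatic}: by Theorem~\ref{min-gen-sl} the minimal cardinality of a trace-generating set (indeed of any generating set, by Proposition~\ref{mintrmingen}) for $\C[\fX(F_2,\SL(4,\C))]$ is $30$, so any generating set of size $30$ is minimal. The main obstacle I anticipate is the careful ordering of the exchange moves together with the verification that at each stage the six auxiliary traces required by Lemma~\ref{sublemma} genuinely lie in the subring generated by the already-modified set---in particular ensuring no circularity (a generator being used to justify its own elimination), which forces the ``remove longest cube-words first, eliminate $\tr(\xb^3),\tr(\yb^3)$ last'' discipline.
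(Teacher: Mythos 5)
Your overall strategy is the paper's: start from the minimal set of Theorem \ref{min-gen-sl}, trade cube-words for inverse-words via Lemma \ref{sublemma} working from the longest words down, and get minimality for free because any generating set of cardinality $30$ is minimal by Theorem \ref{min-gen-sl} and Proposition \ref{mintrmingen}. However, there is a concrete gap in the exchange bookkeeping at word length $3$--$4$. You assert that $\tr(\xb\yb^{-2})$ replaces $\tr(\xb\yb^{3})$ by Lemma \ref{sublemma} with the roles of $\xb$ and $\yb$ swapped; but the lemma applied to the cube $\yb^3$ in $\tr(\xb\yb^{3})$ (with $\ub=\xb$, $\vb=\id$) produces $\tr(\xb\yb^{-1})$, not $\tr(\xb\yb^{-2})$. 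Moreover, in the target set the positive length-$3$ generators $\tr(\xb\yb^{2})$ and $\tr(\xb^{2}\yb)$ are absent, so you must show they lie in the subring generated by $\mathcal{S}\cup\tau(\mathcal{S})\cup\{\tr(\xb^{-1}\yb^{-1}\xb^2\yb^2)\}$, and no application of Lemma \ref{sublemma} does this. The paper needs a separate identity here, obtained by multiplying the characteristic polynomial of $\yb$ by $\yb^{-2}\xb$ and taking traces, which expresses $\tr(\xb\yb^{2})$ through $\tr(\yb),\tr(\yb^{2}),\tr(\yb^{-1}),\tr(\xb),\tr(\xb\yb),\tr(\xb\yb^{-1}),\tr(\xb\yb^{-2})$; without it your final set is not shown to generate. (Your rule also double-counts $\tr(\xb^{3}\yb)=\tr(\yb\xb^{3})$, which under $\tau$ would receive two different replacements, a symptom of the same miscount: $\mathcal{S}$ has $16$ elements, $\mathcal{S}\cap\tau(\mathcal{S})$ is the three $\tau$-fixed traces $\tr(\xb\yb),\tr(\xb^{2}\yb^{2}),\tr(\xb\yb\xb\yb)$, and $\tr((\xb^{2}\yb)^{2})$, $\tr(\xb^{2}\yb^{2}\xb\yb)$ are not $\tau$-fixed.)

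A secondary weakness is your justification of the hypotheses of Lemma \ref{sublemma} at each stage. Citing the generation statement of Theorem \ref{min-gen-sl} is not enough (it only expresses the auxiliary traces as polynomials in all $30$ generators, possibly including the one being deleted); what is needed, and what the paper uses, is that $\C[\fX(F_2,\SL(4,\C))]$ carries the filtration by word length inherited from the grading on $\C[\mathfrak{gl}(4,\C)^2]^{\SL(4,\C)}$, so any trace of word length $\le \ell$ already lies in the subring generated by $\mathcal{G}(\ell)$, the generators of length $\le \ell$ -- which excludes the longer generator being removed. This also matters for auxiliary traces that themselves contain inverse letters (e.g.\ $\tr(\xb^{-1}\yb\xb^{2}\yb^{2})$ arising when passing from $\tr(\xb^{-1}\yb^{3}\xb^{2}\yb^{2})$ to $\tr(\xb^{-1}\yb^{-1}\xb^{2}\yb^{2})$): the paper handles them by applying Lemma \ref{sublemma} in reverse to identify them with positive-word traces of length $\le 9$ before invoking the filtration, a step your proposal does not address. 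With the extra characteristic-polynomial identity and the filtration argument made explicit, your outline becomes essentially the paper's proof.
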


\begin{proof}
We work from largest word length down since $\C[\fX(F_2,\SL(4,\C))]$ inherits a filtration from the multigrading (by degree, or equivalently by word length) on $\C[\mathfrak{gl}(n,\C)^r\quot \SL(n,\C)]$.  Denote by $\mathcal{G}$ the set $\mathcal{G}_1\cup \mathcal{G}_2-\{\tr(\xb^4),\tr(\yb^4)\}$, and let
$\mathcal{G}(\ell)$ be the elements in $\mathcal{G}$ of word length (equivalently, filtered degree) less than or equal to $\ell$.  So $\mathcal{G}=\mathcal{G}(10)$.

We start by showing we can replace the generator $\tr(\xb^{3}\yb^{3}\xb^2\yb^2)$ by the trace coordinate $\tr(\xb^{-1}\yb^{-1}\xb^2\yb^2)$ to obtain a new generating set that agrees with $\mathcal{G}$ on $\mathcal{G}(9)$.  We do this in two steps.  First, with respect to Lemma \ref{sublemma}, we can replace  $\tr(\xb^{3}\yb^{3}\xb^2\yb^2)$ by $\tr(\xb^{-1}\yb^{3}\xb^2\yb^2)$ if $\tr(\yb^{3}\xb^2\yb^2)$, $\tr(\xb\yb^{3}\xb^2\yb^2)$, and $\tr(\xb^2\yb^{3}\xb^2\yb^2)$ are in the subring generated without $\tr(\xb^{3}\yb^{3}\xb^2\yb^2)$ included.  However, $\tr(\yb^{3}\xb^2\yb^2)$, $\tr(\xb\yb^{3}\xb^2\yb^2)$, and $\tr(\xb^2\yb^{3}\xb^2\yb^2)$ are all of word length less than or equal to 9.  Therefore, there exists a polynomials $P_1$, $P_2$, and $P_3$ in the variables $\mathcal{G}(9)=\mathcal{G}-\{\tr(\xb^{3}\yb^{3}\xb^2\yb^2)\}$ so that $\tr(\yb^{3}\xb^2\yb^2)=P_1$, $\tr(\xb\yb^{3}\xb^2\yb^2)=P_2$, and $\tr(\xb^2\yb^{3}\xb^2\yb^2)=P_3$.  We conclude that we may substitute $\tr(\xb^{3}\yb^{3}\xb^2\yb^2)$ by $\tr(\xb^{-1}\yb^{3}\xb^2\yb^2)$ to obtain an equivalent generating set.

Likewise, we can replace $\tr(\xb^{-1}\yb^{3}\xb^2\yb^2)$ by $\tr(\xb^{-1}\yb^{-1}\xb^2\yb^2)$ if $\tr(\xb\yb^2)$, $\tr(\xb^{-1}\yb\xb^2\yb^2)$, and $\tr(\xb^{-1}\yb^2\xb^2\yb^2)$ are in the subring generated by $\mathcal{G}(9)$.  However, $\tr(\xb\yb^2)$ is one of these generators, and $\tr(\xb^{-1}\yb\xb^2\yb^2)$ (similarly $\tr(\xb^{-1}\yb^2\xb^2\yb^2)$), using Lemma \ref{sublemma} in reverse, is polynomially equivalent to $\tr(\xb^3\yb\xb^2\yb^2)$ (similarly $\tr(\xb^3\yb^2\xb^2\yb^2)$) and so is a polynomial in the variables $\mathcal{G}(9)$, as required.

Using the same reasoning, we next substitute the generator $\tr(\xb^3\yb\xb^2\yb\xb\yb)$ (respectively, its image under $\tau$) with $\tr(\xb^{-1}\yb\xb^2\yb\xb\yb)$ (respectively, its image under $\tau$) by working in $\mathcal{G}(8)$.  Working our way down in a like manner, we replace $\tr(\xb^3\yb^3\xb\yb)$ by $\tr(\xb^{-1}\yb^{-1}\xb\yb)$, $\tr(\xb^3\yb^2\xb^2\yb)$ by $\tr(\xb^{-1}\yb^2\xb^2\yb)$, and $\tr(\xb^3\yb^2\xb\yb)$ by $\tr(\xb^{-1}\yb^2\xb\yb)$.

The fact that we can replace $$\{\tr(\xb^3),\tr(\yb^3),\tr(\xb^3\yb),\tr(\xb\yb^3),\tr(\xb^3\yb^2),\tr(\yb^3\xb^2)\}$$ with $$\{\tr(\xb^{-1}),\tr(\yb^{-1}),\tr(\xb^{-1}\yb),\tr(\xb\yb^{-1}),\tr(\xb^{-1}\yb^2),\tr(\yb^{-1}\xb^2)\}$$ follows directly from Lemma \ref{sublemma} as stated.

It remains to prove that $\tr(\xb\yb^{2})$ (and its image under $\tau$) can be replaced with $\tr(\xb\yb^{-2})$ (respectively, its image under $\tau$), after the above substitutions have been made.  But the characteristic polynomial $$\yb^4-\tr(\yb)\yb^3+\left(\frac{\tr(\yb)^2-\tr(\yb^2)}{2}\right)\yb^2-\tr(\yb^{-1})\yb+\id=0$$ implies $\tr(\xb\yb^2)=$ $$\tr(\yb)\tr(\xb\yb)-\left(\frac{\tr(\yb)^2-\tr(\yb^2)}{2}\right)\tr(\xb)+\tr(\yb^{-1})\tr(\xb\yb^{-1})-\tr(\xb\yb^{-2})$$ by multiplying through by $\yb^{-2}\xb$ and then taking the trace of the result.  Consequently, we see the substitution is possible.  This completes the proof.\end{proof}

\begin{remark}
Using the relations in \cite{drensky} we can obtain non-trivial relations of degrees 12, 13, and 14 for $\fX(F_2,\SL(4,\C))$; note, that degree 12 is the minimal degree of any non-trivial relation in these coordinates.
\end{remark}

\begin{remark}
In \cite{Teranishi} a generating set for $\C[\mathfrak{gl}(4,\C)^2\quot \SL(4,\C)]$ was described, and likewise a generating set for $\C[\fX(F_2,\SL(4,\C))]$ is given by the general results of \cite{SikoraGen}.  Neither generating set appears to be minimal however, and both result in generating sets different from ours.
\end{remark}

\section{\texorpdfstring{$\SU(3,1)$}{SU(3,1)} character variety of a rank 2 
free group.}

Let $V=\C^{3,1}$ be the complex vector space $\C^4$ equipped with the Hermitian form of signature (3,1) given by $$\langle\z,\w\rangle=\w^{\ast}H\z=z_{1}\bar w_4+z_2\bar w_2+z_3\bar w_3
+z_4\bar w_1,$$
where $\ast$ denotes complex-conjugate transpose. The matrix of the Hermitian form is given by 
\begin{center}
$H=\left[ \begin{array}{cccc}
           0 & 0 & 0 & 1\\
           0 & 1 & 0 & 0\\
 0 & 0 & 1 & 0\\
 1 & 0 & 0 & 0
          \end{array}\right]$
\end{center}

Then ${\rm SU}(3,1)$ is the group of unit determinant matrices which preserve the Hermitian form $\langle \ ,\ \rangle$.  Each such matrix $A$ satisfies the relation $$A^{-1}=H^{-1}A^{\ast}H.$$

In this section, based on the work in the previous section, we reduce the coordinates needed to determine a polystable orbit of a $\SU(3,1)$ representation of $F_2$.  This gives a global coordinate system (and semi-algebraic embedding) of the corresponding character variety.

First, however, we observe that the smallest set of real invariants we can hope 
for in this case consists of 30 elements.
\begin{proposition}\label{complexminrealmin}
Let $K$ be a real affine algebraic group, and suppose its complexification $G$ is connected and reductive.  Then the minimal number of generators for $\R[\fX(F_r,K)]$ and $\C[\fX(F_r,G)]$ are equal.
\end{proposition}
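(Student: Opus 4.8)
The plan is to identify the minimal number of generators of each ring with the dimension of the Zariski tangent space of the corresponding character variety at the trivial representation $\rho_0$, and to observe that this dimension is insensitive to the scalar extension $\R\hookrightarrow\C$. First I would record a base change isomorphism: since $\hm(F_r,K)=K^r$ is a real affine variety whose complexification is $\hm(F_r,G)=G^r$, since $K$ is Zariski dense in its (connected) complexification $G$ — so that a complex regular function on $G^r$ is $K$-invariant exactly when it is $G$-invariant — and since passing to $K$-invariants commutes with the faithfully flat extension $\R\hookrightarrow\C$, one obtains
\[\R[\fX(F_r,K)]\otimes_\R\C\ \cong\ \bigl(\R[K^r]^K\bigr)\otimes_\R\C\ \cong\ \C[G^r]^G\ \cong\ \C[\fX(F_r,G)]\]
as $\C$-algebras; in particular $\R[\fX(F_r,K)]$ is finitely generated, and the maximal ideal at $\rho_0$ in $\C[\fX(F_r,G)]$ is the extension of scalars of the maximal ideal at $\rho_0$ in $\R[\fX(F_r,K)]$. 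Hence the two cotangent spaces are related by $\otimes_\R\C$ and $\dim_\C T_{\rho_0}\fX(F_r,G)=\dim_\R T_{\rho_0}\fX(F_r,K)$; write $N$ for this common value.

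It then remains to show that each minimal number of generators equals $N$. The bound ``$\geq N$'' is the Nullstellensatz estimate used in the proof of Proposition \ref{mintrmingen}, valid over any field: the minimal number of algebra generators of the coordinate ring of an affine variety over a field $k$ is at least $\dim_k(\mathfrak{m}/\mathfrak{m}^2)$ for the maximal ideal $\mathfrak{m}$ of any $k$-rational point, in particular for $\rho_0$. For ``$\leq N$'' I would carry over the rest of the argument of Proposition \ref{mintrmingen}: the \'etale slice theorem at $\rho_0$, whose stabilizer is the full group and whose slice representation is $H^1(F_r,\mathfrak{g})=\mathfrak{g}^r$ since $F_r$ is free, identifies $T_{\rho_0}\fX(F_r,G)$ with $T_0(\mathfrak{g}^r\quot G)$ (the reductive-group analogue of \cite[Lemma 3.16]{FlLa3}); the ring $\C[\mathfrak{g}^r]^G$ is graded, so by Lemma \ref{graded} its minimal number of generators equals $\dim_\C T_0(\mathfrak{g}^r\quot G)=N$; and the word-length filtration on $\C[\fX(F_r,G)]$ transfers a minimal trace-generating set of $\C[\mathfrak{g}^r]^G$ to a generating set of $\C[\fX(F_r,G)]$ of the same size $N$. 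The identical chain of reasoning over $\R$ shows the minimal number of generators of $\R[\fX(F_r,K)]$ is also $N$, which would complete the argument.

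Everything except the final step of the previous paragraph is standard and indifferent to the base field; that step is the real obstacle. It amounts to the analogue, for an arbitrary connected reductive $G$ and its real form $K$, of the $\SL(n,\C)$-specific comparison behind \cite[Proposition 21]{law2} and Proposition \ref{mintrmingen} — namely, that the word-length filtration degenerates the ``multiplicative'' ring $\C[\fX(F_r,G)]$ (respectively $\R[\fX(F_r,K)]$) to a ring with the same minimal number of generators as the ``additive'' graded ring $\C[\mathfrak{g}^r]^G$ (respectively $\R[\mathfrak{k}^r]^K$). If one wishes to sidestep the tangent-space count, an alternative is to show that the set of $N$-element generating tuples of $\C[\fX(F_r,G)]$ is a non-empty constructible set defined over $\R$ that, by virtue of the conical degeneration the word-length filtration provides, must contain an $\R$-point and therefore descends to an $N$-element generating set of $\R[\fX(F_r,K)]$ — but making this descent precise is essentially the same difficulty.
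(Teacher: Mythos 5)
Your first paragraph reproduces what is, in effect, the paper's entire proof: the identification $\R[\fX(F_r,K)]\otimes_\R\C\cong\C[\fX(F_r,G)]$ coming from Zariski density of $K$ in $G$, after which the paper concludes directly by matching generating sets under extension and restriction of scalars --- no tangent-space computation appears at all. Everything you do after that is a detour, and it contains a genuine flaw beyond the step you yourself flag as unproved. Your plan is to show that \emph{both} minimal numbers equal $N=\dim T_{\rho_0}\fX$ at the trivial representation, by transporting a minimal generating set from the graded ring $\C[\mathfrak{g}^r]^G$ to $\C[\fX(F_r,G)]$ without changing its cardinality. In the generality of this proposition that intermediate claim is false. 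Take $K=\mathrm{U}(1)$ and $G=\C^*$ (connected and reductive): then $\fX(F_r,G)=(\C^*)^r$ is smooth, so $\dim T_{\rho_0}\fX=r$, and $\C[\mathfrak{g}^r]^G=\C[x_1,\dots,x_r]$ has $r$ generators; but $\C[\fX(F_r,G)]=\C[t_1^{\pm1},\dots,t_r^{\pm1}]$ cannot be generated by $r$ elements, since a surjection $\C[x_1,\dots,x_r]\to\C[(\C^*)^r]$ would give a closed embedding of the irreducible $r$-dimensional variety $(\C^*)^r$ into $\mathbb{A}^r$, forcing $(\C^*)^r\cong\mathbb{A}^r$, which is absurd (compare units). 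The minimal number is $r+1$ (use $t_1,\dots,t_r$ and $(t_1\cdots t_r)^{-1}$). So in general the minimal number of generators is strictly larger than $\dim T_{\rho_0}\fX$, and the ``additive-to-multiplicative'' transfer you need genuinely changes the count.

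The reason Proposition \ref{mintrmingen} can equate the minimal number of generators with $\dim T_{[\rho_0]}\fX$ is that it leans on \cite[Proposition 21]{law2}, i.e., on the specific structure of trace coordinates for $\SL(n,\C)$ and the specialization $\det(\xb_k)=1$, which absorbs exactly the discrepancy visible in the $\C^*$ example; no analogue is available for an arbitrary connected reductive $G$, nor a real version for $\R[K^r]^K$. So the obstacle you name in your closing paragraph is real, and it cannot be repaired in this form: the route through the tangent space at $\rho_0$ would prove a statement that is false for some groups covered by the proposition, even though the proposition's conclusion (equality of the two minimal numbers) still holds there. The intended argument is the much shorter one you already set up: stop at the base-change isomorphism and compare generating sets of $\R[\fX(F_r,K)]$ and of $\C[\fX(F_r,G)]=\R[\fX(F_r,K)]\otimes_\R\C$ directly, which is precisely how the paper concludes.
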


\begin{proof}
This pretty much follows from the discussion at the beginning of Section 6 of \cite{CFLO}.

Obviously, $\C[\fX(F_r,G)]:=\C[G^r]^G\subset\C[G^r]^K$.  On the other hand, any invariant $f\in \C[G^r]^K$ satisfies $k\cdot f(x)-f(x)=0$ for all $x\in G^r$ and $k\in K$, but since $K$ is Zariski dense in $G$, the relation $k\cdot f(x)-f(x)=0$ extends to $g\cdot f(x)-f(x)=0$ for all $x\in G^r$ and $g\in G$.  Thus,  $\C[G^r]^G=\C[G^r]^K$.  Consequently, $\C[\fX(F_r,G)]=\R[K^r]^K\otimes_\R\C=\R[\fX(F_r, K)]\otimes_\R \C$.  In other words, any generating set for $\C[\fX(F_r,G)]$ determines a generating set for $\R[\fX(F_r, K)]$, and any generating set for $\R[\fX(F_r, K)]$ extends by scalars to a generating set for $\C[\fX(F_r,G)]$.  The result follows.\end{proof}

\begin{theorem}\label{su31-traces}
The following 22 traces determine any $($polystable$)$ pair $\langle A,B\rangle$ up to conjugation where $A,B\in \SU(3,1)$:   
\begin{center}
\begin{tabular}{|c|c|}
\hline
Word Length &  Generator\\
\hline
$1$ & $\tr(\xb)$, $\tr(\yb)$\\
\hline
$2$ & $\tr(\xb^2),$ $\tr(\xb\yb)$, $\tr(\yb^2)$, $\tr(\xb^{-1}\yb)$\\
\hline
$3$ & $\tr(\xb\yb^{2})$, $\tr(\yb\xb^{2})$\\
\hline
$4$ & $\tr(\xb^2\yb^2),$ $\tr(\xb\yb\xb\yb)$,  $\tr(\xb^{-1}\yb^{-1}\xb\yb)$\\
\hline
$5$ &  $ \tr(\xb^{-1}\yb^2\xb\yb)$, $ \tr(\yb^{-1}\xb^2\yb\xb)$\\
\hline
$6$ &  $\tr((\xb^2\yb)^2)$, $\tr((\yb^2\xb)^2)$, $\tr(\xb^2\yb^2\xb\yb)$, $\tr(\xb^{-1}\yb^{-1}\xb^2\yb^2)$ \\

 & $\tr(\yb^2\xb^2\yb\xb)$,  $ \tr(\xb^{-1}\yb^2\xb^2\yb),$ $ \tr(\yb^{-1}\xb^2\yb^2\xb)$\\
\hline
$7$ & $\tr(\xb^{-1}\yb\xb^2\yb\xb\yb)$,  $\tr(\yb^{-1}\xb\yb^2\xb\yb\xb)$ \\
\hline
\end{tabular}
\end{center}
\end{theorem}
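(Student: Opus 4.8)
The plan is to descend from the thirty complex trace coordinates of Corollary~\ref{min-gen-sl-sym} by exploiting the defining relation $A^{-1}=H^{-1}A^{\ast}H$ of $\SU(3,1)$, exactly as the nine coordinates of the $\SL(3,\C)$ case are cut down to five for $\SU(2,1)$. The key observation is that for $A,B\in\SU(3,1)$ and any word $w\in F_2$ one has $w(A,B)^{-1}=H^{-1}w(A,B)^{\ast}H$, hence $\tr\bigl(w(A,B)^{-1}\bigr)=\tr\bigl(w(A,B)^{\ast}\bigr)=\overline{\tr(w(A,B))}$; that is, on $\fX(F_2,\SU(3,1))$ the complex conjugate of the trace of a word equals the trace of the inverse word. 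Since $\mathrm{Re}\,\tr(w)=\tfrac12(\tr(w)+\tr(w^{-1}))$ and $\mathrm{Im}\,\tr(w)=\tfrac1{2i}(\tr(w)-\tr(w^{-1}))$, Proposition~\ref{complexminrealmin} (which gives $\C[\fX(F_2,\SL(4,\C))]=\R[\fX(F_2,\SU(3,1))]\otimes_{\R}\C$) then shows that the thirty generators of Corollary~\ref{min-gen-sl-sym}, together with their complex conjugates, generate $\R[\fX(F_2,\SU(3,1))]$ over $\R$. As the coordinate ring separates the polystable conjugation orbits, it suffices to rewrite each of those thirty generators, using complex conjugation and the $\SL(4,\C)$ trace identities (Cayley--Hamilton for $\xb$ and $\yb$ and the fundamental trace identity, all valid on the $\SU(3,1)$-locus), as a polynomial in the twenty-two listed traces and their conjugates; because each such conjugate is determined pointwise by the corresponding coordinate, the twenty-two traces will then separate the orbits.

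I would run the reduction from largest word length down, in parallel with the proof of Corollary~\ref{min-gen-sl-sym}. Twenty of the listed twenty-two coordinates already occur among the thirty, so only the other ten of the thirty require attention. Four of them, $\tr(\xb^{-1})$, $\tr(\yb^{-1})$, $\tr(\yb^{-1}\xb)$ and $\tr(\yb^{-1}\xb^{-1}\yb\xb)$, are simply the complex conjugates of $\tr(\xb)$, $\tr(\yb)$, $\tr(\xb^{-1}\yb)$ and $\tr(\xb^{-1}\yb^{-1}\xb\yb)$ — invert the word and cyclically rotate — and drop out at once. For $\tr(\xb\yb^{-2})$ and its conjugate $\tr(\xb^{-1}\yb^{2})$ one applies the Cayley--Hamilton relation for $\yb$ exactly as in Lemma~\ref{sublemma} to recover both from $\tr(\xb\yb^{2})$, $\tr(\xb\yb)$, $\tr(\xb)$, $\tr(\yb)$, $\tr(\yb^{2})$ and conjugates of coordinates; this, with its image under $\tau$, is why $\tr(\xb\yb^{2})$ and $\tr(\yb\xb^{2})$ replace the four coordinates $\tr(\xb\yb^{-2}),\tr(\xb^{-1}\yb^{2}),\tr(\yb\xb^{-2}),\tr(\yb^{-1}\xb^{2})$ of Corollary~\ref{min-gen-sl-sym}. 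The remaining two, the length-$9$ positive words $\tr(\xb^{2}\yb^{2}\xb\yb\xb^{2}\yb)$ and its $\tau$-image $\tr(\yb^{2}\xb^{2}\yb\xb\yb^{2}\xb)$, are the crux: they contain no cube or fourth power of $\xb$ or $\yb$, so Cayley--Hamilton does not apply to them directly. Here I would pass to the inverse word via the key identity, for instance $\overline{\tr(\xb^{2}\yb^{2}\xb\yb\xb^{2}\yb)}=\tr(\yb^{-1}\xb^{-2}\yb^{-1}\xb^{-1}\yb^{-2}\xb^{-2})$, substitute the Cayley--Hamilton expansions of $\xb^{-1},\xb^{-2},\yb^{-1},\yb^{-2}$ in non-negative powers, and apply the fundamental $\SL(4,\C)$ trace identity to the resulting products of five blocks to fold the answer back into the span of the twenty-two coordinates. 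Keeping this last step under control — so that nothing of word length exceeding those in the table survives — is the main obstacle; it is the higher-rank counterpart of the phenomenon that in $\SU(2,1)$ the commutator trace $\tr([A,B])$ occurs without its inverse, subject to one real constraint.

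Finally, each equality generated by this process in which the Cayley--Hamilton reduction of an inverse word does not simply return another coordinate or a coordinate's conjugate records a genuine real relation among the twenty-two coordinates; tallying them produces the five real relations referred to in the introduction, the simplest being $\mathrm{Im}\,\tr(\xb^{2})=\mathrm{Im}\bigl(\tr(\xb)^{2}\bigr)$ (from $\overline{\tr(\xb^{2})}=\tr(\xb^{-2})$ after Cayley--Hamilton) and its image under $\tau$. Once every one of the thirty generators has been written in terms of the twenty-two listed traces and their complex conjugates, we conclude that these twenty-two traces determine the polystable conjugation orbit of $\langle A,B\rangle$, which is the statement of the theorem.
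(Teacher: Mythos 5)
Your strategy is the same as the paper's: start from the thirty symmetric generators of Corollary \ref{min-gen-sl-sym}, use $\tr(\wb^{-1})=\overline{\tr(\wb)}$ on $\SU(3,1)$ to drop generators whose inverse word is (up to cyclic permutation) another retained generator, and use the characteristic-polynomial identity of Lemma \ref{sublemma} to trade $\tr(\xb\yb^{-2})$, $\tr(\xb^{-1}\yb^{2})$ and their $\tau$-images for $\tr(\xb\yb^{2})$, $\tr(\yb\xb^{2})$. That correctly disposes of eight of the ten generators not appearing in the table. But the remaining two, $\tr(\xb^{2}\yb^{2}\xb\yb\xb^{2}\yb)$ and its $\tau$-image, are exactly where your argument stops being a proof: the proposal to conjugate to the inverse word, expand $\xb^{-1},\xb^{-2},\yb^{-1},\yb^{-2}$ by Cayley--Hamilton into positive powers, and then invoke ``the fundamental trace identity'' is not carried out, and as sketched it is not clearly closable --- after expansion you obtain traces of long positive words, and re-expressing those in the generating set of Theorem \ref{min-gen-sl} may reintroduce the very degree-9 generators you are trying to eliminate, so the reduction is potentially circular. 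You flag this yourself as ``the main obstacle,'' which is an honest admission that the crux of the theorem is missing.

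The missing idea is to apply the degree-4 characteristic polynomial not to a single letter but to a composite word. Up to cyclic permutation, $\yb^{2}\xb^{2}\yb\xb\yb^{2}\xb=(\yb^{2}\xb)^{2}\,\xb\yb\xb$, so setting $\vb=\yb^{2}\xb$ and $\ub=\xb\yb\xb$ in the identity already derived in the proof of Corollary \ref{min-gen-sl-sym},
\[
\tr(\vb^{2}\ub)=\tr(\vb)\tr(\ub\vb)-\tfrac{1}{2}\bigl(\tr(\vb)^{2}-\tr(\vb^{2})\bigr)\tr(\ub)+\tr(\vb^{-1})\tr(\ub\vb^{-1})-\tr(\ub\vb^{-2}),
\]
one gets an expression whose ``positive'' terms $\tr(\yb^{2}\xb)$, $\tr(\yb^{2}\xb^{2}\yb\xb)$, $\tr((\yb^{2}\xb)^{2})$, $\tr(\xb^{2}\yb)$ are all in the table, and whose ``inverse'' terms are handled by the unitary symmetry: $\tr(\vb^{-1})=\overline{\tr(\yb^{2}\xb)}$, $\tr(\ub\vb^{-1})=\tr(\xb\yb^{-1})=\overline{\tr(\xb^{-1}\yb)}$, and $\tr(\ub\vb^{-2})=\tr(\xb\yb^{-1}\xb^{-1}\yb^{-2})=\overline{\tr(\xb^{-1}\yb^{2}\xb\yb)}$. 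This single computation, together with its image under $\tau$, is how the paper eliminates the two degree-9 generators; without it (or an equivalent controlled reduction) your proof of the theorem is incomplete.
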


\begin{proof}
Since the inverse of a matrix in $\SU(3,1)$ is conjugate by the $(3,1)$-form $H$ to the complex-conjugate transpose of that matrix, we see that for any word $\wb$ in letter from $\SU(3,1)$, that $\tr(\wb^{-1})=\overline{\tr(\wb)}$.  Therefore, given the $\iota$ related elements in the symmetric generating set $\mathcal{S}\cup \tau(\mathcal{S})\cup \{\tr(\xb^{-1}\yb^{-1}\xb^2\yb^2)\}$, we can eliminate the following six traces:  $\tr(\xb^{-1})$, $\tr(\yb^{-1})$, $\tr(\xb^{-1}\yb^{2})$, $\tr(\yb^{-1}\xb^{2})$, $\tr(\yb^{-1}\xb),$ and $\tr(\yb^{-1}\xb^{-1}\yb\xb)$.  

Therefore, it remains to show that $\tr(\yb^2\xb^2\yb\xb\yb^2\xb)=\tr((\yb^2\xb)^2\xb\yb\xb)$ and its $\tau$-image can be freely eliminated.  Indeed, we have earlier derived that $\tr(\vb^2\ub)=$ $$\tr(\vb)\tr(\ub\vb)-\left(\frac{\tr(\vb)^2-\tr(\vb^2)}{2}\right)\tr(\ub)+\tr(\vb^{-1})\tr(\ub\vb^{-1})-\tr(\ub\vb^{-2}),$$ and so letting $\vb=\yb^2\xb$ and $\ub=\xb\yb\xb$, we derive that: $\tr((\yb^2\xb)^2\xb\yb\xb)$
\begin{eqnarray*}
&=&\tr(\yb^2\xb)\tr(\yb^2\xb^2\yb\xb)-\left(\frac{\tr(\yb^2\xb)^2-\tr((\yb^2\xb)^2)}{2}\right)\tr(\xb^2\yb)+\\
& &\tr((\yb^2\xb)^{-1})\tr((\xb\yb\xb)(\yb^2\xb)^{-1})-\tr((\xb\yb\xb)(\yb^2\xb)^{-2})\\
&=&\tr(\yb^2\xb)\tr(\yb^2\xb^2\yb\xb)-\left(\frac{\tr(\yb^2\xb)^2-\tr((\yb^2\xb)^2)}{2}\right)\tr(\xb^2\yb)+\\
& &\overline{\tr(\yb^2\xb)}\tr(\xb\yb^{-1})-\overline{\tr(\xb^{-1}\yb^2\xb\yb)},
\end{eqnarray*}
which is clearly in terms of the other generators (or their complex conjugates).\end{proof}

\begin{lemma} \label{wmg} { \cite[Lemma 6.2.5]{gold}}. 
Let $T$ be a transformation in ${\rm U}(n,1)$. If $\lambda$ is an eigenvalue of $T$, then $\overline{\lambda}^{\,-1}$ is also an eigenvalue with the same multiplicity as that of $\lambda$.  \end{lemma}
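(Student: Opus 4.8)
The statement is the classical spectral symmetry of a $(n,1)$-unitary map, and the plan is to deduce it from the relation $T^{-1}=H^{-1}T^{\ast}H$ already recorded above, which holds verbatim for ${\rm U}(n,1)$ with $H$ the Gram matrix of the Hermitian form. First I would read this relation as saying that $T^{-1}$ and $T^{\ast}$ are conjugate; hence they have the same characteristic polynomial, and more precisely share every eigenvalue together with its algebraic and geometric multiplicity. Since $|\det T|=1$, no eigenvalue of $T$ vanishes, so inverting eigenvalues is legitimate.

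Next I would compare the two spectra directly. List the eigenvalues of $T$ with multiplicity as $\lambda_1,\dots,\lambda_{n+1}$; then the eigenvalues of $T^{-1}$ are $\lambda_1^{-1},\dots,\lambda_{n+1}^{-1}$, while those of $T^{\ast}=\overline{T}^{\,t}$ are $\overline{\lambda_1},\dots,\overline{\lambda_{n+1}}$ (transposition preserves the spectrum and entrywise conjugation conjugates it). The conjugacy $T^{-1}\sim T^{\ast}$ then forces the equality of multisets $\{\lambda_i^{-1}\}=\{\overline{\lambda_i}\}$, and passing to the multiset of inverses on both sides yields $\{\lambda_i\}=\{\overline{\lambda_i}^{\,-1}\}$ --- precisely the assertion that $\lambda$ and $\overline{\lambda}^{\,-1}$ occur among the $\lambda_i$ with the same multiplicity. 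To pin down geometric multiplicities as well I would instead track kernels: $\ker(T^{-1}-\mu I)=\ker(T-\mu^{-1}I)$ since $T^{-1}-\mu I=-\mu T^{-1}(T-\mu^{-1}I)$, and $\dim_{\C}\ker(T^{\ast}-\mu I)=\dim_{\C}\ker(T-\overline{\mu}I)$ since $T^{\ast}-\mu I$ is the conjugate transpose of $T-\overline{\mu}I$ and neither operation changes the kernel dimension; combining with $T^{-1}\sim T^{\ast}$ gives $\dim\ker(T-\mu^{-1}I)=\dim\ker(T-\overline{\mu}I)$, i.e. the $\lambda$- and $\overline{\lambda}^{\,-1}$-eigenspaces have equal dimension.

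There is no real obstacle here --- it is elementary linear algebra --- so the only point requiring care is the bookkeeping of multiplicities under the two operations $\lambda\mapsto\lambda^{-1}$ and $\lambda\mapsto\overline{\lambda}$, together with being explicit about algebraic versus geometric multiplicity. As an alternative one can argue pointwise: if $Tv=\lambda v$ with $v\neq 0$, invariance of the form gives $\langle v,w\rangle=\langle Tv,Tw\rangle=\lambda\langle v,Tw\rangle$ for every $w$, hence $T^{\ast}Hv=\lambda^{-1}Hv$ and therefore $T^{t}\,\overline{Hv}=\overline{\lambda}^{\,-1}\,\overline{Hv}$, exhibiting $\overline{\lambda}^{\,-1}$ as an eigenvalue of $T^{t}$ and hence of $T$; but the conjugacy argument is the cleaner route to the multiplicity statement.
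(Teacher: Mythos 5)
Your proof is correct, and note that the paper itself gives no argument for this lemma—it is simply quoted from Goldman \cite[Lemma 6.2.5]{gold}—so yours fills that in with the standard argument: from $T^{\ast}HT=H$ one gets $T^{-1}=H^{-1}T^{\ast}H$, the similarity $T^{-1}\sim T^{\ast}$ identifies the multisets $\{\lambda_i^{-1}\}$ and $\{\overline{\lambda_i}\}$, and your kernel-dimension bookkeeping settles geometric multiplicities as well. This is exactly the same identity the paper records for $\SU(3,1)$ and exploits later (e.g.\ $\tr(\wb^{-1})=\overline{\tr(\wb)}$ in Theorem~\ref{su31-traces}), so your argument is fully consistent with the paper's framework; no gaps.
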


The above lemma implies the following useful fact.

\begin{lemma}\label{realrel}
Let $A$ be an element in ${\rm SU}(3,1)$. Then $$\sigma(A)= \frac{1}{2}(\tr^2(A)-\tr(A^2))$$ is a real number. 
\end{lemma}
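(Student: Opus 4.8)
The plan is to use Lemma \ref{wmg} to control the eigenvalues of $A$, and then express $\sigma(A)$ as a symmetric function of those eigenvalues. First I would note that $\sigma(A) = \frac{1}{2}(\tr^2(A) - \tr(A^2))$ is exactly the second elementary symmetric polynomial $e_2(\lambda_1,\lambda_2,\lambda_3,\lambda_4)$ in the eigenvalues $\lambda_1,\dots,\lambda_4$ of $A$ (counted with multiplicity); this is the standard identity $e_2 = \frac{1}{2}(p_1^2 - p_2)$ relating power sums to elementary symmetric functions. Equivalently, $\sigma(A)$ is (up to sign) a coefficient of the characteristic polynomial of $A$, namely $\det(tI - A) = t^4 - \tr(A) t^3 + \sigma(A) t^2 - \tr(A^{-1}) t + 1$ for $A \in \SL(4,\C)$, so it suffices to show this coefficient is real.

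Next I would invoke Lemma \ref{wmg}: since $A \in \SU(3,1) \subset \U(3,1)$, the multiset of eigenvalues of $A$ is invariant under the involution $\lambda \mapsto \bar\lambda^{-1}$. The key observation is then that $\overline{e_2(\lambda_1,\dots,\lambda_4)} = e_2(\bar\lambda_1,\dots,\bar\lambda_4)$, and I want to relate this to $e_2$ of the inverted eigenvalues. Because $\det(A) = \prod \lambda_i = 1$, one has the identity $e_2(\lambda_1^{-1},\dots,\lambda_4^{-1}) = e_2(\lambda_1,\dots,\lambda_4)$: indeed $e_k(\lambda_i^{-1}) = e_{n-k}(\lambda_i)/e_n(\lambda_i)$ for $n=4$ variables, so $e_2(\lambda_i^{-1}) = e_2(\lambda_i)/e_4(\lambda_i) = e_2(\lambda_i)$. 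Combining these, $\overline{\sigma(A)} = \overline{e_2(\lambda_i)} = e_2(\bar\lambda_i) = e_2((\bar\lambda_i^{-1})^{-1})$, and since $\{\bar\lambda_i^{-1}\}$ is just a permutation of $\{\lambda_i\}$ by Lemma \ref{wmg}, we get $e_2(\bar\lambda_i^{-1}) = e_2(\lambda_i) = \sigma(A)$; applying the inversion identity once more (or directly) yields $\overline{\sigma(A)} = \sigma(A)$, hence $\sigma(A) \in \R$.

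Alternatively, and perhaps more cleanly, I would argue directly from the relation $A^{-1} = H^{-1} A^{\ast} H$ stated in the excerpt. This gives $\tr(A^{-1}) = \tr(A^{\ast}) = \overline{\tr(A)}$ and $\tr(A^{-2}) = \overline{\tr(A^2)}$, so $\sigma(A^{-1}) = \frac{1}{2}(\tr^2(A^{-1}) - \tr(A^{-2})) = \overline{\sigma(A)}$. On the other hand, for $A \in \SL(4,\C)$ the characteristic polynomial identity above shows $\sigma(A^{-1})$ equals the $t^2$-coefficient of the reciprocal polynomial, which (since the polynomial is palindromic up to the $\SL(4,\C)$ normalization, with constant term and leading term both $1$) is again $\sigma(A)$; explicitly $\sigma(A^{-1}) = e_2(\lambda_i^{-1}) = e_2(\lambda_i) = \sigma(A)$ using $\det A = 1$ as above. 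Therefore $\overline{\sigma(A)} = \sigma(A^{-1}) = \sigma(A)$, and $\sigma(A)$ is real.

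The only mild obstacle is bookkeeping: making sure the symmetric-function identity $e_2(\lambda_i^{-1}) = e_2(\lambda_i)$ is correctly justified by $\det A = 1$ (this is where the $\SL$, not just $\GL$, hypothesis is used), and making sure the eigenvalue multiset statement from Lemma \ref{wmg} is applied to the full multiset with multiplicities so that $e_2$ of the two multisets genuinely agree. Both are routine, so I expect no real difficulty; the proof is essentially a two-line computation once the eigenvalue symmetry and the $\det = 1$ normalization are in hand. I would present the $A^{-1} = H^{-1}A^\ast H$ version as the main argument since it is self-contained and matches the notation already set up in this section, and remark that it also follows from Lemma \ref{wmg}.
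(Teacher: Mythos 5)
Your proof is correct, and your first argument is in substance the paper's own: the paper writes $\sigma$ as the second elementary symmetric function of the eigenvalues, pairs each product $\lambda_i\lambda_j$ with the inverse of the complementary product using $\det A=1$, and then applies Lemma \ref{wmg} to identify the result with $\overline{\sigma}$ --- exactly your chain $e_2(\lambda)=e_2(\lambda^{-1})=e_2(\bar\lambda)=\overline{e_2(\lambda)}$, just written out eigenvalue by eigenvalue. Your preferred ``main argument'' is a mild but genuine variant: instead of quoting Lemma \ref{wmg} you use the defining relation $A^{-1}=H^{-1}A^{\ast}H$ to get $\tr(A^{-1})=\overline{\tr(A)}$ and $\tr(A^{-2})=\overline{\tr(A^{2})}$, hence $\sigma(A^{-1})=\overline{\sigma(A)}$, and then conclude with the same symmetric-function identity $e_2(\lambda_i^{-1})=e_2(\lambda_i)$ from $\det A=1$. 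What this buys is self-containedness (no appeal to the eigenvalue lemma from Goldman's book, and no need to track multiplicities in the eigenvalue multiset), at the cost of still needing the characteristic-polynomial/elementary-symmetric bookkeeping to see $\sigma(A^{-1})=\sigma(A)$; the paper's route keeps everything at the level of eigenvalues, which is why it states Lemma \ref{wmg} first. Either write-up is acceptable, and your identification of where $\SL$ versus $\GL$ is used ($e_2(\lambda^{-1})=e_2(\lambda)$ requires $e_4=1$) is exactly the right point to flag.
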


\begin{proof}
The characteristic polynomial $\chi_A(x)$ of $A$ is given by: $$\chi_A(x)=x^4-\tr(A) x^3 + \sigma(A) x^2 - \tr (A^{-1}) x +1.$$
Let $\lambda_1, \lambda_2, \lambda_3 $ and $\lambda_4$ are the eigenvalues of $A$. Note that $\lambda_1 \lambda_2 \lambda_3 \lambda_4=\det A=1$.  Using the Lemma \ref{wmg}, we know for each $j$, there exists $k$ such that $\lambda_j^{-1}=\overline{\lambda_k}$. So, $\tr A^{-1} = \overline{\tr}(A)$ and  
\begin{eqnarray*}
\sigma &=&\lambda_1\lambda_2+\lambda_3\lambda_4
+\lambda_1\lambda_3+\lambda_2\lambda_4
+\lambda_1\lambda_4+\lambda_2\lambda_3\\
&=&\lambda_3^{-1}\lambda_4^{-1}+\lambda_1^{-1}\lambda_2^{-1}
+\lambda_2^{-1}\lambda_4^{-1}+\lambda_1^{-1}\lambda_3^{-1}
+\lambda_2^{-1}\lambda_3^{-1}+\lambda_1^{-1}\lambda_4^{-1} \\
&=&\overline{\lambda}_1\overline{\lambda}_2+\overline{\lambda}_3\overline{\lambda}_4
+\overline{\lambda}_1\overline{\lambda}_3+\overline{\lambda}_2\overline{\lambda}_4
+\overline{\lambda}_1\overline{\lambda}_4+\overline{\lambda}_2\overline{\lambda}_3=\overline{\sigma}
\end{eqnarray*}\end{proof}

\begin{corollary}\label{realcoord}
There are 39 real polynomials coming from the real and imaginary parts of the traces in the above theorem that determine the class of any polystable pair in $\SU(3,1)$.
\end{corollary}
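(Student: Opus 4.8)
The plan is to start from Theorem \ref{su31-traces}, which already gives 22 complex traces that determine the conjugation class of a polystable pair in $\SU(3,1)$, and simply pass to real and imaginary parts. Writing each $\tr(\wb)=\mathrm{Re}\,\tr(\wb)+i\,\mathrm{Im}\,\tr(\wb)$, the $44$ real functions so obtained clearly still separate polystable orbits, so the only remaining task is to show that $5$ of these $44$ are redundant, leaving $39$. The redundancy comes precisely from Lemma \ref{realrel}: for each of the five words $\wb\in\{\xb,\yb,\xb\yb,\xb^2\yb,\yb^2\xb\}$ appearing among our generators whose square $\tr(\wb^2)$ is also among the generators, the quantity $\sigma(\wb)=\tfrac12(\tr(\wb)^2-\tr(\wb^2))$ is real. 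Hence $\mathrm{Im}\,\tr(\wb^2)$ is determined by $\mathrm{Im}(\tr(\wb)^2)=2\,\mathrm{Re}\,\tr(\wb)\cdot\mathrm{Im}\,\tr(\wb)$, a polynomial in the real and imaginary parts of $\tr(\wb)$, which are already in our list. This eliminates exactly five imaginary parts, namely those of $\tr(\xb^2),\tr(\yb^2),\tr(\xb\yb\xb\yb)$ (using $(\xb\yb)^2$), $\tr((\xb^2\yb)^2)$, and $\tr((\yb^2\xb)^2)$, bringing the count from $44$ down to $39$.

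First I would check that all five relevant words and their squares actually occur in the generating set of Theorem \ref{su31-traces}: $\tr(\xb),\tr(\xb^2)$ at word lengths $1,2$; $\tr(\yb),\tr(\yb^2)$ likewise; $\tr(\xb\yb)$ at length $2$ and $\tr(\xb\yb\xb\yb)=\tr((\xb\yb)^2)$ at length $4$; $\tr(\xb^2\yb)$—wait, one must be careful here: the list contains $\tr(\yb\xb^2)$ (equal to $\tr(\xb^2\yb)$) at length $3$ and $\tr((\xb^2\yb)^2)$ at length $6$; similarly $\tr(\xb\yb^2)=\tr(\yb^2\xb)$ at length $3$ and $\tr((\yb^2\xb)^2)$ at length $6$. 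So all five pairs are present. Then for each such pair, Lemma \ref{realrel} applied to the $\SU(3,1)$-element represented by the word $\wb$ gives that $\sigma(\wb)$ is real, hence $\mathrm{Im}\,\tr(\wb^2)=\mathrm{Im}(\tr(\wb)^2)$, expressing one imaginary part in terms of two already-present real functions. I would then state that no further such reductions are available among the remaining generators (the other words in the list do not have their squares appearing), so $39$ is what this particular scheme yields, matching the claim.

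The main obstacle, such as it is, is purely bookkeeping: one must make sure the five eliminated functions are genuinely distinct and genuinely redundant, and that removing them does not accidentally remove a function needed to express one of the others—but since each eliminated $\mathrm{Im}\,\tr(\wb^2)$ is expressed using only $\mathrm{Re}\,\tr(\wb)$ and $\mathrm{Im}\,\tr(\wb)$, and these survive, there is no circularity. A second, more cosmetic point: the corollary as phrased asserts these $39$ polynomials "determine the class of any polystable pair," which follows immediately because the map to $\C^{22}$ is injective on polystable orbits by Theorem \ref{su31-traces}, and our $39$ real coordinates recover all $44$ real-and-imaginary parts, hence recover the $22$ complex traces. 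I would spell this last implication out in one sentence to close the argument. (It is worth remarking, though not strictly needed for the corollary, that the five relations are exactly the "$5$ real relations" advertised in the abstract, and that Proposition \ref{complexminrealmin} shows $30$ is the true minimum, so $39$ is an upper bound from this symmetric scheme rather than a sharp count.)
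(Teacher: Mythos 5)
Your proposal is correct and follows essentially the same route as the paper: the paper's proof of Corollary \ref{realcoord} likewise counts $2\times 22=44$ real and imaginary parts and invokes Lemma \ref{realrel} for the five pairs of generators of the form $(\tr(\wb),\tr(\wb^2))$ with $\wb\in\{\xb,\yb,\xb\yb,\xb^2\yb,\yb^2\xb\}$ to discard five imaginary parts, arriving at $39$. Your write-up simply makes explicit the bookkeeping (identifying the five pairs in the table and the formula $\mathrm{Im}\,\tr(\wb^2)=2\,\mathrm{Re}\,\tr(\wb)\,\mathrm{Im}\,\tr(\wb)$) that the paper leaves implicit.
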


\begin{proof} 
From Lemma \ref{realrel} we have 5 further real relations coming from 5 pairs of invariants of the form $(\tr(\wb),\tr(\wb^2))$ in the proof of Theorem \ref{su31-traces}.\end{proof}

\subsection{Reducible pairs in \texorpdfstring{$\SU(3, 1)$}{SU(3,1)}}
The holomorphic isometry group of complex hyperbolic space $$\ch^3:=\{\z\in\C^{3,1}:\langle\z,\z \rangle<0\}/\C^*$$ is the projective unitary group $${\rm PSU}(3,1)={\rm SU }(3,1)/\{\pm \id,\pm i\id\}.$$ Based on their fixed points, holomorphic isometries of $\ch^3$ are classified as follows:
\begin{enumerate}
\item An isometry is \emph{elliptic} if it fixes at least one point of $\ch^3$.
\item An isometry is \emph{loxodromic} if it is non-elliptic and fixes exactly two points of $\partial\ch^3$, one of which is attracting and other repelling.  
\item An isometry is \emph{parabolic} if it is non-elliptic and fixes exactly one point of $\partial\ch^3$.
\end{enumerate}

In \cite{gp2}, 15 real coordinates are constructed that determine a generic pair of loxodromic elements in $\SU(3,1)$. We show in this final subsection that for {\it reducible} loxodromic representations we can determine the conjugation class with fewer coordinates.  

This is in contrast with Corollary \ref{realcoord} where 39 coordinates are shown to globally distinguish all (polystable) conjugation classes of pairs in $\SU(3,1)$. The difference between these two results is the invariant theory used to establish Corollary \ref{realcoord} significantly simplifies when applied to the reducible locus.

\begin{theorem}\label{red-lox-traces}
Let $(A,B)$ be a reducible loxodromic pair in ${\rm SU }(3,1)$. 
\begin{itemize}
\item[(1)] Let $(A, B)$ preserve a complex line in $\ch^3$. Then
 $( A,B)$ is determined up to conjugacy in ${\rm SU}(3,1)$ by 
${\rm tr}(A),$ $ {\rm tr}(B),$  $ {\rm  tr}(AB)$, $ \sigma(A)$,   $\sigma(B)$ and  $ \sigma(AB)$.
\item[(2)] Let $(A, B)$ preserve a complex plane in $\ch^3$. Then
 $( A,B)$ is determined up to conjugacy in ${\rm SU}(3,1)$ by 
${\rm tr}(A),$ $ {\rm tr}(B),$  $ {\rm  tr}(AB)$,  
$\tr( A^{-1} B)$,  $\tr([A, B])$, $\sigma(A)$ and $\sigma(B)$.
\end{itemize}
\end{theorem}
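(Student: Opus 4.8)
The plan is to treat the two cases separately, reducing in each to a lower-rank situation where a Fricke--Vogt-type result is already available, and then accounting for the extra data needed to recover the embedding of the invariant subspace into $\ch^3$. In both cases the key structural fact is that a reducible loxodromic pair preserving a totally geodesic complex subspace $\Sigma\subset\ch^3$ may be conjugated, using the $\SU(3,1)$-action, so that the common invariant subspace is put in a standard position; the stabilizer of that subspace in $\SU(3,1)$ then acts on the pair restricted to $\Sigma$, and the restricted pair is determined by its own trace invariants. The point is that the loxodromic hypothesis forces the generators to act diagonalizably with eigenvalues that split according to $V=W\oplus W^{\perp}$ (with $W$ the linear span of the lift of $\Sigma$), so the remaining block on $W^{\perp}$ is pinned down by the signature constraint of Lemma~\ref{wmg} together with $\det=1$.

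For part (1), when $(A,B)$ preserves a complex line, $\Sigma\cong\ch^1$ and $W$ is a $2$-dimensional subspace of $V$ on which the Hermitian form has signature $(1,1)$; its orthogonal complement $W^{\perp}$ is a positive-definite $2$-plane. After conjugation we may assume $W=\mathrm{span}(e_1,e_4)$ and $W^{\perp}=\mathrm{span}(e_2,e_3)$, so that $A=A_W\oplus A_{W^{\perp}}$ with $A_W\in\U(1,1)$ and $A_{W^{\perp}}\in\U(2)$, and similarly for $B$. The restriction $(A_W,B_W)$ is essentially a pair in $\SL(2,\C)$ up to scalars, hence by the classical Fricke--Vogt theorem it is determined up to conjugation by $\tr(A_W),\tr(B_W),\tr(A_WB_W)$ together with the common scalar (a root of unity fixed by the determinant-one normalization), and these are recovered from $\tr(A),\tr(B),\tr(AB),\sigma(A),\sigma(B),\sigma(AB)$ once one knows how the $4\times4$ trace splits as a sum of the $2\times2$ block traces: indeed $\tr(A)=\tr(A_W)+\tr(A_{W^{\perp}})$ and $\sigma(A)=\tr(A_W)\tr(A_{W^{\perp}})+\sigma(A_W)+\sigma(A_{W^{\perp}})$, and since $A_{W^{\perp}}\in\U(2)$ its two invariants are constrained. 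The remaining input is that the $\U(2)$-blocks $(A_{W^{\perp}},B_{W^{\perp}})$ are themselves determined: a pair of unitary $2\times2$ matrices with prescribed traces and prescribed product-trace is rigid up to $\U(2)$-conjugacy for the same Fricke--Vogt reason (this is the $\SU(2)$/$\SU(3)$ phenomenon cited from \cite{FL}), so the six scalars suffice. One must also check the compatibility/reconstruction direction --- that the listed invariants genuinely cut out the block invariants, using the eigenvalue reciprocity $\overline{\lambda}^{-1}$ from Lemma~\ref{wmg} to remove ambiguity in splitting $\tr$ and $\sigma$ between the two blocks.

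For part (2), when $(A,B)$ preserves a complex plane, $\Sigma\cong\ch^2$ and $W$ is a $3$-dimensional subspace of signature $(2,1)$ with $W^{\perp}$ a positive line; after conjugation $A=A_W\oplus(\zeta_A)$ with $A_W\in\U(2,1)$ and $\zeta_A\in\U(1)$ a unit complex number, and $\det A=1$ forces $\zeta_A=(\det A_W)^{-1}$. Now the restricted pair $(A_W,B_W)$ is a pair in $\U(2,1)$, and one normalizes the determinant to land in (a finite cover of) $\SU(2,1)$; by the $\SU(2,1)$ Fricke--Vogt result recalled in the introduction --- the $\SL(3,\C)$ nine coordinates of \cite{law} reducing to five in $\SU(2,1)$, namely $\tr(A_W),\tr(B_W),\tr(A_WB_W),\tr(A_W^{-1}B_W)$ and the sign of $\tr([A_W,B_W])$ (equivalently $\tr([A_W,B_W])$ itself, which is real there up to the known ambiguity) --- the pair $(A_W,B_W)$ is determined up to $\U(2,1)$-conjugacy. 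One then translates these five $3\times3$ invariants into the $4\times4$ invariants listed: $\tr(A)=\tr(A_W)+\zeta_A$, $\tr(A^{-1}B)=\tr(A_W^{-1}B_W)+\zeta_A^{-1}\zeta_B$, $\tr([A,B])=\tr([A_W,B_W])$ (the $\U(1)$-blocks commute and contribute $1$), and the scalars $\zeta_A,\zeta_B$ are extracted from $\sigma(A),\sigma(B)$ together with the determinant normalization and the eigenvalue reciprocity. Finally, one argues that the stabilizer of the flag $W^{\perp}\subset V$ inside $\SU(3,1)$ surjects onto enough of $\U(2,1)$ that $\U(2,1)$-conjugacy of the restricted pair lifts to $\SU(3,1)$-conjugacy of $(A,B)$; this is where the loxodromic hypothesis is again used, to guarantee that both standard positions of $\Sigma$ and of the eigenframes are canonical.

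The main obstacle I expect is the bookkeeping of the block-trace relations in the reconstruction direction: showing that the specific scalars $\sigma(A)=\tfrac12(\tr^2 A-\tr A^2)$ and the product traces really do separate the two blocks --- i.e. that there is no residual ambiguity in deciding which eigenvalues of $A$ lie in $W$ versus $W^{\perp}$ --- and that the $\U(2)$ (resp. $\U(1)$) block data is not lost. Here one leans on Lemma~\ref{wmg}: the positive-definite complement forces its eigenvalues to come in reciprocal-conjugate pairs on the unit circle, which rigidifies the splitting. A secondary technical point is making precise the passage from $\U(k,\ell)$-conjugacy of the restricted pair to $\SU(3,1)$-conjugacy of the full pair, i.e. checking that the orthogonal decomposition $V=W\oplus W^{\perp}$ itself is an invariant of the pair (true because $W$ is the span of the attracting/repelling eigenlines that lie in $\partial\ch^3$, which are intrinsic to the loxodromic dynamics) and that the determinant-one constraint does not obstruct the lift.
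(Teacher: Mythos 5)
Your proposal follows essentially the same route as the paper: conjugate the pair into ${\rm S}({\rm U}(1,1)\times{\rm U}(2))$ (resp.\ ${\rm S}({\rm U}(2,1)\times{\rm U}(1))$), use Lemma \ref{wmg} and loxodromy to separate the block eigenvalue data coming from the listed $\tr$ and $\sigma$ invariants, and then apply the classical Fricke--Vogt theorem to each $2\times 2$ block (resp.\ the two-generator trace result in ${\rm U}(2,1)$, which the paper cites as \cite[Theorem 4.8]{parker}). Only minor slips appear---the determinant of the ${\rm U}(1,1)$ block is a unit complex number recovered from the eigenvalue data rather than a root of unity forced by the $\det=1$ normalization, and the identity should read $\tr([A,B])=\tr([A_W,B_W])+1$---neither of which affects the argument.
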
 
\begin{proof}
Let $G=\langle A, B \rangle$ is a reducible polystable subgroup of ${\rm 
SU}(3,1)$ generated by loxodromic elements.  Let $\W$ be the $G$-invariant subspace of $\ch^3$. Then the following cases occur: 

\subsubsection*{Case 1.} $\W$ projects to a complex line in $\ch^3$. Then, it is a two dimensional subspace of $\C^{3,1}$ of signature $(1,1)$. Without loss of generality we consider $(A, B)$ as a pair of elements from ${\rm S}({\rm U}(1,1) \times {\rm U}(2))$. Let 
$A=\begin{pmatrix} A_1 & 0 \\ 0 & A_2 \end{pmatrix}$ and $B=\begin{pmatrix} B_1 & 0 \\ 0 & B_2 \end{pmatrix}$, where $A_1, B_1 \in {\rm U}(1,1)$ and $A_2, B_2 \in {\rm U}(2)$.  Note that $A_1$ and $B_1$ have eigenvalues with modulus different from one, and $A_2$ and $B_2$ have eigenvalues of modulus one, see \cite{gpp}.  In particular, we can determine the components from the eigenvalues.

A loxodromic element in ${\rm SU}(3,1)$ is completely determined up to conjugacy by its characteristic polynomial, that is, by $\tr$ and $\sigma$ of the element (see \cite[Lemma 4.1]{gpp}). Hence  $\tr(A)$, $\tr(B)$, $\tr(AB)$, $\sigma(A)$, $\sigma(B)$ and $\sigma(AB)$ completely determine the eigenvalues of $A$, $B$ and $AB$, and consequently they determine the traces of $A_i$, $B_i$, $A_i^2$, $B_i^2$, and $A_i B_i$, $i=1,2$. 

Applying the Fricke-Vogt Theorem (\cite[Theorem A]{gold2}) we see that $(A_i,  B_i)$, $i=1,2$, are determined up to conjugacy in their respective components by $\tr(A_i)$, $\tr(B_i)$, $\tr(A_i B_i)$, $\tr( A_i^2)$ and $\tr (B_i^2)$.  The result follows.

\subsubsection*{Case 2.} $\W$ is a two dimensional totally geodesic subspace, 
i.e. $\W$ has signature $(2,1)$ as a subspace of $\C^{3,1}$. Without loss of generality, we can assume 
$(A, B)$ is a pair in ${\rm S}({\rm U}(2,1) \times{ \rm U}(1))$. 
Let $A=\begin{pmatrix} A_0 & 0 \\ 0 & \lambda \end{pmatrix}$ and $B=\begin{pmatrix} B_0 & 0 \\ 0 & \mu \end{pmatrix}$, where $A_0$ and $B_0$ are elements of ${\rm U}(2,1)$ and $\lambda, \mu \in {\rm U}(1)$.  It follows from \cite[Theorem 4.8 ]{parker} that the ${\rm U}(2,1)$ component of the pair is determined by 
$\tr (A_0)$, $\tr (B_0)$, $\tr (A_0B_0)$, $\tr( A_0^{-1} B_0)$,  $\tr([A_0, B_0])$, $\det A_0$ and $\det B_0$. Now $\det A_0=\lambda^{-1}$ and $\det B_0=\mu^{-1}$ and these are given by eigenvalues of $A$ and $B$ respectively. Thus $\tr (A), \sigma(A)$ and $\tr(B), \sigma(B)$ determine $\lambda$ and $\mu$ completely. 
So, the pair $(A, B)$ is determined up to conjugacy  by $\tr (A)$, $\tr (B)$, $\tr (AB)$, 
$\tr( A^{-1} B)$,  $\tr([A, B])$, $\sigma(A)$ and $\sigma(B)$.\end{proof} 

\def\cdprime{$''$} \def\Dbar{\leavevmode\lower.6ex\hbox to 0pt{\hskip-.23ex
  \accent"16\hss}D}

\end{document}